\numberwithin{equation}{section}
\newtheorem{Theorem}[equation]{Theorem}
\newtheorem{Proposition}[equation]{Proposition}
\newtheorem{Lemma}[equation]{Lemma}
\theoremstyle{definition}
\newtheorem{Remark}[equation]{Remark}
\newtheorem{eg}[equation]{Example}
\newtheorem{Definition}[equation]{Definition}
\newcommand{\cA}{\mathcal{A}}
\newcommand{\sD}{\mathscr{D}}
\newcommand{\cH}{\mathcal{H}}
\newcommand{\bI}{\mathbf{I}}
\newcommand{\bK}{\mathbf{K}}
\newcommand{\cK}{\mathcal{K}}
\newcommand{\sK}{\mathscr{K}}
\newcommand{\bU}{\mathbf{U}}
\newcommand{\cU}{\mathcal{U}}
\newcommand{\fg}{\mathfrak{g}}
\renewcommand{\AA}{\mathbb{A}}
\newcommand{\FF}{\mathbb{F}}
\newcommand{\GG}{\mathbb{G}}
\newcommand{\NN}{\mathbb{N}}
\newcommand{\QQ}{\mathbb{Q}}
\newcommand{\RR}{\mathbb{R}}
\newcommand{\ZZ}{\mathbb{Z}}
\renewcommand{\phi}{\varphi}
\renewcommand{\emptyset}{\varnothing}
\renewcommand{\tilde}[1]{\widetilde{#1}}
\def\Ddots{\mathinner{\mkern1mu\raise\p@
\vbox{\kern7\p@\hbox{.}}\mkern2mu
\raise4\p@\hbox{.}\mkern2mu\raise7\p@\hbox{.}\mkern1mu}}
\DeclareMathOperator{\sgn}{sgn}
\newcommand{\Inv}{\mathrm{Inv}}
\newcommand{\suchthat}{\mid}
\newcommand{\wt}{\text{wt}}
\newcommand{\textif}{\text{ if }}
\newcommand{\textand}{\text{ }\mathrm{and}\text{ }}
\newcommand{\dir}{\mathrm{dir}}
\newcommand{\ep}{\mathrm{end}}
\newcommand{\mathperiod}{\, .}
\renewcommand{\comment}[1]{}
\begin{document}

\title[Walk algebras and distinguished subexpressions]{Walk algebras, distinguished subexpressions, and point counting in Kac-Moody flag varieties}
\author{Dinakar Muthiah}
\thanks{D.M. was partially supported by a PIMS postdoctoral fellowship}
\address{
Department of Mathematics and Statistics, 
Lederle Graduate Research Tower, 1623D,
University of Massachusetts Amherst,
710 N. Pleasant Street,
Amherst, MA 01003-9305, USA
}
\email{muthiah@math.umass.edu}
\author{Daniel Orr}
\thanks{D.O. was partially supported by NSF grant DMS-1600653}
\address{Department of Mathematics (MC 0123), 460 McBryde Hall, Virginia Tech, 225 Stanger Street, Blacksburg, VA 24061 USA}
\email{dorr@vt.edu}
\maketitle

\begin{abstract}
  We study walk algebras and Hecke algebras for Kac-Moody root systems. Each choice of orientation for the set of real roots gives rise to a corresponding ``oriented'' basis for each of these algebras.
We show that the notion of distinguished subexpression naturally arises when studying the transition matrix between oriented bases.
We then relate these notions to the geometry of Kac-Moody flag varieties and Bott-Samelson varieties. In particular, we show that the number of points over a finite field in certain intersections of these varieties is given by change of basis coefficients between oriented bases of the Hecke algebra. Using these results we give streamlined derivations of Deodhar's formula for $R$-polynomials and point-counting formulas for specializations of nonsymmetric Macdonald polynomials $E_\lambda(\mathsf{q},t)$ at $\mathsf{q}=0,\infty$. 
\end{abstract}


\section{Introduction}

Alcove walk algebras were introduced in \cite{Ram} to study the combinatorics of affine Hecke algebras and Hall-Littlewood polynomials. The purpose of this paper is to study walk algebras for general Kac-Moody groups. Roughly speaking, the walk algebra is obtained from the Hecke algebra by forgetting the braid relations and weakening the quadratic relations; thus the Hecke algebra is naturally a quotient of the walk algebra.

For each orientation of the set of real roots we define a basis of the walk algebra indexed by subexpressions. Our first aim is to investigate the combinatorics of these bases. We show that the change-of-basis matrix between oriented bases is controlled by a subset of distinguished subexpressions depending on the chosen pair of orientations (Theorem~\ref{thm:change-of-basis-formula}). The notion of distinguished subexpression (for certain choices of orientations) originated in the work of Deodhar \cite{Deo} and has also played an important role in work of Billig and Dyer \cite{BD}. It also appears in the works of Gaussent and Littelmann \cite{Gaussent-Littelmann} and Schwer \cite{Sch} (as positively folded galleries); and in the works of Ram \cite{Ram} and Parkinson, Ram, and Schwer \cite{PRS} (as positively folded alcove walks). We demonstrate, generalizing \cite{Ram}, that the definition of distinguished subexpressions arises naturally as a consequence of the relations in the walk algebra.

After establishing the basic result on change of oriented bases in walk algebras, we descend to the Hecke algebra and relate the change of basis coefficients to point counting in Kac-Moody flag varieties (Theorem~\ref{thm:point-count-in-flag-variety}). Up to a simple normalization, the change of basis coefficients from the standard orientation to another orientation are given by the number of points over a finite field in certain orbit intersections in the flag variety. This equality is already known as a consequence of \cite[Theorem 5]{BD}, but our approach is different. We mention that oriented bases of Hecke algebras appeared in earlier works of Dyer \cite{Dy,Dy2}. More recently, work of Gobet \cite{Gobet} establishes the positivity of the expansion coefficients of the Kazhdan-Lusztig basis of the Hecke algebra into any fixed oriented basis.

Our primary goal in this paper is to provide a synthesized exposition of various results and ideas from \cite{BD}, \cite{Gaussent-Littelmann}, \cite{Ram}, and \cite{PRS}.
In addition to our investigation of walk algebras, one novelty of our presentation is that we give a different, more geometric proof of the point-counting result described above. This proof, which is inspired by the methods of \cite{Gaussent-Littelmann}, uses the Bott-Samelson resolution of Schubert varieties. We believe that for certain audiences this proof may be more approachable than existing proofs which involve direct manipulation of relations in the Kac-Moody group.

We use these results to give streamlined derivations of Deodhar's formula for $R$-polynomials \cite{Deo} and point-counting formulas for nonsymmetric Macdonald polynomials $E_\lambda(\mathsf{q},t)$ at $\mathsf{q}=0,\infty$. In both of these situations we emphasize the role played by change of oriented bases. We discuss how the point-counting formula for nonsymmetric Macdonald polynomials at $\mathsf{q}=0$ relates to a similar result of \cite{Ion}.

In future work we aim to adapt the constructions of this paper to the setting of $p$-adic Kac-Moody groups and their Iwahori-Hecke algebras (see \cite{BKP,BGR,M} for existing work on this topic). One promising direction is to incorporate the work of Gaussent and Rousseau (\cite{GR1,GR2}) into the present exposition. Their work suggests that the notion of expression, which is a sequence of Weyl group elements indexed by integers, should be replaced with a certain type of continuous path in the coweight space indexed by the interval $[0,1] \subseteq \RR$. For example, they work with a notion called ``Hecke path'' that appears to be a useful proxy for the notion of positively folded alcove path.

Another promising direction is to incorporate the combinatorial study of the object that replaces the Weyl group. This object is no longer a Coxeter group, but it carries a natural Bruhat order, introduced in \cite{BKP}, and a length function \cite{M,MO} compatible with this order. Conjectures about point-counting in double-affine flag varieties were made using this length function in \cite[\S8.2]{MO}. A full generalization of the results of this note would include answers to these conjectures. With such a goal in mind, one of our initial motivations for writing this note was to give an exposition that is likely to generalize to the $p$-adic Kac-Moody setting.

\subsection*{Acknowledgments}

D.M. thanks Abid Ali, Gerald Cliff, Manish Patnaik, and Anna Pusk\'as for their talks and discussion in the learning seminar on the topic of \cite{Gaussent-Littelmann} which took place in Fall 2015 at the University of Alberta. He especially thanks Manish Patnaik and Anna Pusk\'as for their work in organizing the seminar. D.O. thanks Elizabeth Mili\'{c}evi\'{c} for helpful discussions about point counting in affine flag varieties. We also wish to thank an anonymous referee whose comments led to several improvements in our exposition.

\section{Notation and preliminaries}
\label{S:notation}

Let us consider a Kac-Moody root system with index set $I$, Weyl group $W$, simple roots $\Pi = \{ \alpha_{i} \}_{i \in I}$, and simple reflections $S = \{s_{i} \}_{i \in I}$. For $ s_i \in S$, we will abuse notation and sometimes write $\alpha_{s_i} = \alpha_{i}$.

Imaginary roots will not play a role in what we do, so when we say root we mean {\it real root}. Let us write $\Delta_+$ (resp. $\Delta_-$) for the set of positive (resp. negative) roots.  Let $\ell: W \rightarrow \NN$ be the length function.

For a positive integer $r$, we write $[r]=\{1,2,\dotsc,r\}$.

\subsection{Expressions and subexpressions}

An {\it expression} of length $r$ is a $r$-tuple $\sigma = (\sigma_1, \dotsc, \sigma_r) \in S^r$. For any such $\sigma$ there is a unique $r$-tuple $(i_1, \dotsc, i_r) \in I^r$ such that $\sigma = (s_{i_1}, \dotsc, s_{i_r})$.

A {\it subexpression} $\tau$ of $\sigma$ is a $r$-tuple $\tau = (\tau_1, \dotsc, \tau_r) \in S^r$ such that $\tau_j \in \{ e, \sigma_j \}$. Note $\tau$ consists of the data of the $r$-tuple  $(\tau_1, \dotsc, \tau_r)$ {\it and} the expression $\sigma$, i.e., $\tau$ remembers of which expression it is a subexpression. Notationally, let us write $\tau \leq \sigma$ to indicate that $\tau$ is a subexpression of $\sigma$. Let us also write $\sigma \leq \sigma$ for the {\it full subexpression} that agrees with $\sigma$ in every slot.

Given two subexpressions $\tau, \rho$ of $\sigma$, we say that $\rho$ is a subexpression of $\tau$, denoted $\rho \leq \tau \leq \sigma$, if $\tau_k = e $ implies $\rho_k = e$. 
Given a subexpression $\tau = (\tau_1, \dotsc, \tau_r)$ let us define the {\it product} of $\tau$ by
\begin{align}
  \label{eq:2}
  \pi(\tau) = \tau_1 \cdots \tau_r
  \mathperiod 
\end{align}

\subsection{Orientations}

An {\it orientation} $A$ is a decomposition of $\Delta = \Delta^A_+ \cup \Delta^{A}_-$ such that $\Delta^A_+ \cap \Delta^{A}_- = \emptyset$, $\Delta^A_{+} =  -\Delta^{A}_{-}$, and $\Delta^A_+$ is convex: 
\begin{align}\label{E:convex}
\text{$\alpha,\beta\in\Delta^A_+$ and $a\alpha+b\beta\in\Delta$ for $a,b>0$ implies $a\alpha+b\beta\in\Delta_A^+$.}
\end{align} 
Given a root $\beta$, we will write $\beta >_{A} 0$ if $\beta \in \Delta^A_{+}$ (and $\beta <_{A} 0$ if $\beta \in \Delta^A_{-}$). Let us write $A_0$ for the {\it standard orientation} where $\Delta^{A_0}_+ = \Delta_+$.

\begin{eg}
The Weyl group acts on the set of orientations as follows. Given an orientation $A$ and $x \in W$, we can form a new orientation $x(A)$ by defining $\beta >_{x(A)} 0$ if and only if $x^{-1}(\beta) >_{A} 0$.
\end{eg}

\begin{eg}
For any orientation $A$, its negative $-A$ is the orientation with $\Delta^{-A}_+=-\Delta^A_+$.
\end{eg}

Given an orientation and a root $\beta$, we define the sign of $\beta$ with respect to $A$ by
\begin{align}
  \label{eq:9}
  \sgn_{A}(\beta) =
  \begin{cases}
    +1 & \textif \beta >_{A} 0 \\
    -1 & \textif \beta <_{A} 0  
    \mathperiod
  \end{cases}
\end{align}

\begin{eg}\label{eg:affine}
Suppose that $\Delta=\{\alpha+n\delta \suchthat \alpha\in\Delta_\circ,\, n\in\ZZ\}$ is an irreducible root system of untwisted affine Kac-Moody type. Here $\Delta_\circ$ is a Kac-Moody root system of finite-type and the positive roots $\Delta_+$ are those $\alpha+n\delta$ such that $n\geq 0$ and $n>0$ if $\alpha\in(\Delta_\circ)_-$. Another important orientation is the {\it periodic orientation} $A_\infty$, which is defined by $\alpha+n\delta>_{A_\infty}0$ if and only if $\alpha\in(\Delta_\circ)_+$. We also define $A_{-\infty}=-A_\infty$. We note that the orientations $A_\infty$ are not of the form $x(A_0)$ for any $x\in W$.
\end{eg}

\begin{Remark}
In \cite[Introduction]{BD}, one finds some remarks on the classification of orientations. Here we simply mention that in finite Kac-Moody type, all orientations are of the form $x(A_0)$ for $x\in W$, while in affine Kac-Moody type, there are finitely many orientations up to the action of the Weyl group.
\end{Remark}

\begin{Remark}
Let $\widehat{\Delta}\supset\Delta$ be the set of all real and imaginary roots of the given Kac-Moody root system. One may define an orientation of $\widehat{\Delta}$ as above, but with the convexity assumption relaxed by replacing $a\alpha+b\beta$ in \eqref{E:convex} simply by $\alpha+\beta$. The intersection of such an orientation of $\widehat{\Delta}$ with $\Delta$ produces an orientation of $\Delta$ in the sense defined above (using \eqref{E:convex}). In fact, it follows from the proof of \cite[Proposition~1]{BD} that the orientations of $\Delta$ obtained in this way are exactly those of the form $\Delta^A_+=\Delta\cap P$, where $P$ is the cone of elements greater than or equal to $0$ with respect to some lexicographic ordering of the $\mathbb{R}$-span of $\Delta$. The latter are known as {\it geometric orientations} of $\Delta$, which form in general a proper subset of all orientations of $\Delta$. However, the applications considered in this paper involve only geometric orientations.
\end{Remark}

\subsection{The length function}

The following discussion of the length function will be used when we relate the change-of-basis coefficients for oriented bases to point-counting in Kac-Moody flag varieties. 

Following Dyer \cite{Dy}, we define for any orientation $A$
a length function $\ell_A: W\to \ZZ$ defined by
\begin{align}
\ell_A(w)&=\sum_{\alpha\in\Inv(w^{-1})} \sgn_A(\alpha) =\ell(w)-2\#(\Inv(w^{-1})\cap\Delta^A_-)
\end{align}
where $\Inv(w^{-1})=\Delta_+\cap w(\Delta_-)$. It satisfies the following basic properties.
\begin{Lemma}\label{L:ell-A}
	Let $A$ be an orientation. Then:
	\begin{enumerate}
        \item For $i \in I$, we have
                  \begin{align}
  	\label{E:ellA-wsi}
	\ell_A(ws_i)-\ell_A(w)&=
	\begin{cases}
	+1 & \text{if $w(\alpha_i)>_A 0$}\\
	-1 & \text{if $w(\alpha_i)<_A 0$}
        \mathperiod
	\end{cases}
                  \end{align}
          
		\item $\ell_{A}(e)=0$.
		\item For any $w\in W$, $\ell_{-A}(w)=-\ell_A(w)$.
		\item For any $x,w\in W$, $\ell_{x(A)}(w)=\ell_A(x^{-1}w)-\ell_A(x^{-1})$.
	\end{enumerate}
\end{Lemma}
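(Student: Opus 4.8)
The plan is to obtain (2) and (3) immediately from the definition, to establish (1) by tracking how the inversion set $\Inv(w^{-1})=\Delta_+\cap w(\Delta_-)$ changes under right multiplication by a simple reflection, and then to deduce (4) from (1) by a short ``matching increments'' argument. For (2), note $\Inv(e^{-1})=\Delta_+\cap\Delta_-=\emptyset$, so the defining sum for $\ell_A(e)$ is empty. For (3), since $\Delta^{-A}_{\pm}=\Delta^A_{\mp}$ we have $\sgn_{-A}(\alpha)=-\sgn_A(\alpha)$ for every root $\alpha$, and summing this identity over the finite set $\Inv(w^{-1})$ gives $\ell_{-A}(w)=-\ell_A(w)$.

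For (1), write $(ws_i)^{-1}=s_iw^{-1}$, so that $\Inv((ws_i)^{-1})=\Delta_+\cap ws_i(\Delta_-)$. Since $s_i$ preserves $\Delta\setminus\{\pm\alpha_i\}$ setwise and interchanges $\alpha_i$ with $-\alpha_i$, we have $ws_i(\Delta_-)=(w(\Delta_-)\setminus\{-w(\alpha_i)\})\cup\{w(\alpha_i)\}$. Intersecting with $\Delta_+$: if $w(\alpha_i)\in\Delta_+$ then $-w(\alpha_i)\notin\Delta_+$ and $\Inv((ws_i)^{-1})=\Inv(w^{-1})\sqcup\{w(\alpha_i)\}$; if $w(\alpha_i)\in\Delta_-$ then $-w(\alpha_i)\in\Delta_+\cap w(\Delta_-)=\Inv(w^{-1})$ and $\Inv((ws_i)^{-1})=\Inv(w^{-1})\setminus\{-w(\alpha_i)\}$. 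In both cases a short computation gives $\ell_A(ws_i)-\ell_A(w)=\sgn_A(w(\alpha_i))$, which is precisely the claimed formula. I expect this case analysis — and in particular keeping the sign of $w(\alpha_i)$ with respect to $\Delta_+$ distinct from its sign with respect to $A$ — to be the main place where care is needed, though it is entirely standard.

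For (4), fix $x\in W$ and set $g(w)=\ell_{x(A)}(w)$ and $h(w)=\ell_A(x^{-1}w)$. By the definition of the orientation $x(A)$ we have $\sgn_{x(A)}(\beta)=\sgn_A(x^{-1}\beta)$ for every root $\beta$, so part (1), applied both to the orientation $x(A)$ and (with $w$ replaced by $x^{-1}w$) to the orientation $A$, yields
\[
g(ws_i)-g(w)=\sgn_{x(A)}(w(\alpha_i))=\sgn_A((x^{-1}w)(\alpha_i))=h(ws_i)-h(w)
\]
for all $w\in W$ and $i\in I$. Hence $g-h$ is unchanged under right multiplication by each $s_i$; since $S$ generates $W$, every element of $W$ is connected to $e$ by a chain of such multiplications, so $g-h$ is constant. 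Evaluating at $w=e$ and using (2) gives $g(e)-h(e)=0-\ell_A(x^{-1})=-\ell_A(x^{-1})$, whence $\ell_{x(A)}(w)=\ell_A(x^{-1}w)-\ell_A(x^{-1})$ for all $w$. One could instead prove (4) directly via the substitution $\alpha\mapsto x^{-1}\alpha$ in the sum defining $\ell_{x(A)}(w)$, reducing it to a set-theoretic identity among inversion sets, but the increment argument seems cleaner and reuses (1).
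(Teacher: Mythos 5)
Your proof is correct: the empty-inversion-set and sign-flip observations give (2) and (3), the case analysis of how $\Inv((ws_i)^{-1})$ differs from $\Inv(w^{-1})$ by adjoining or deleting the single root $\pm w(\alpha_i)$ correctly yields $\ell_A(ws_i)-\ell_A(w)=\sgn_A(w(\alpha_i))$, and the increment-matching argument for (4) is valid since $\sgn_{x(A)}(\beta)=\sgn_A(x^{-1}\beta)$ and $S$ generates $W$. The paper states these properties without proof (they go back to Dyer's twisted length functions), and your argument is exactly the standard verification one would supply, so there is nothing to compare beyond confirming it works.
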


For an orientation $A$ and an expression $\tau=(\tau_1,\dotsc,\tau_r)$, we define
\begin{align}
\label{E:zeta+}
\zeta_\tau^+ &= \#\{k \in [r] \suchthat \tau_k = \sigma_k \textand -\tau_1 \cdots \tau_k (\alpha_{\sigma_k}) >_A 0 \}\\
\label{E:zeta-}
\zeta_\tau^- &= \#\{k \in [r] \suchthat \tau_k = \sigma_k \textand -\tau_1 \cdots \tau_k (\alpha_{\sigma_k}) <_A 0 \}\\
\label{E:kappa}
  \kappa_\tau &= \#\{ k \in [r] \suchthat \tau_k = e \}
\mathperiod
\end{align}

\begin{Lemma}\label{L:kappa+2tau}
	Let $A$ be an orientation and let $\sigma$ be a reduced expression. Then for any subexpression $\tau\le\sigma$, we have
	\begin{align}\label{E:kappa+2tau}
          \kappa_\tau+2\zeta_\tau^+ &= \ell(\pi(\sigma))+\ell_A(\pi(\tau))
                                      \mathperiod
	\end{align}
\end{Lemma}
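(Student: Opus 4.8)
The plan is to prove \eqref{E:kappa+2tau} by induction on the length $r$ of the expression $\sigma$. First I would set up notation: for a subexpression $\tau=(\tau_1,\dots,\tau_r)\le\sigma=(s_{i_1},\dots,s_{i_r})$, write $w_k = \tau_1\cdots\tau_k$ for the partial products (so $w_0=e$ and $w_r=\pi(\tau)$), and similarly let $\sigma$ restricted to its first $k$ slots define a reduced expression for its own partial product. Since $\sigma$ is reduced, each initial segment $(s_{i_1},\dots,s_{i_k})$ is reduced as well, so $\ell(\pi(\sigma)) = r$; thus the claim is equivalent to $\kappa_\tau + 2\zeta_\tau^+ = r + \ell_A(\pi(\tau))$. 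The base case $r=0$ is trivial: all three quantities vanish, using $\ell_A(e)=0$ from Lemma~\ref{L:ell-A}(2).

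For the inductive step, I would let $\sigma' = (s_{i_1},\dots,s_{i_{r-1}})$ and $\tau' = (\tau_1,\dots,\tau_{r-1})$, so that $\tau'\le\sigma'$ and by induction $\kappa_{\tau'} + 2\zeta_{\tau'}^+ = (r-1) + \ell_A(\pi(\tau'))$. Now split into two cases according to the last slot. If $\tau_r = e$, then $\pi(\tau) = \pi(\tau')$, so $\ell_A(\pi(\tau)) = \ell_A(\pi(\tau'))$; also $\kappa_\tau = \kappa_{\tau'}+1$ and $\zeta_\tau^+ = \zeta_{\tau'}^+$ (the last slot contributes nothing to $\zeta^+$ since $\tau_r\ne\sigma_r$). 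Adding up gives $\kappa_\tau + 2\zeta_\tau^+ = \kappa_{\tau'}+1+2\zeta_{\tau'}^+ = r + \ell_A(\pi(\tau'))= r + \ell_A(\pi(\tau))$, as desired.

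The more interesting case is $\tau_r = \sigma_r = s_{i_r}$. Then $\kappa_\tau = \kappa_{\tau'}$, while $\zeta_\tau^+ = \zeta_{\tau'}^+ + [\,{-w_r(\alpha_{i_r})} >_A 0\,]$ where $w_r = \pi(\tau')s_{i_r} = \pi(\tau)$ and $[\cdot]$ is the indicator. The key computation is to relate $\ell_A(\pi(\tau))$ to $\ell_A(\pi(\tau'))$ using Lemma~\ref{L:ell-A}(1): since $\pi(\tau) = \pi(\tau')s_{i_r}$, we have $\ell_A(\pi(\tau)) - \ell_A(\pi(\tau')) = +1$ if $\pi(\tau')(\alpha_{i_r}) >_A 0$ and $-1$ if $\pi(\tau')(\alpha_{i_r}) <_A 0$. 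So the whole identity reduces to checking that
\begin{align*}
2\,[\,{-\pi(\tau)(\alpha_{i_r})} >_A 0\,] \;=\; 1 + \bigl(\ell_A(\pi(\tau)) - \ell_A(\pi(\tau'))\bigr).
\end{align*}
The one subtlety here — the step I expect to be the main (minor) obstacle — is matching the sign conventions: the argument of $\sgn_A$ in $\zeta_\tau^+$ is $-\pi(\tau)(\alpha_{i_r}) = -\pi(\tau')s_{i_r}(\alpha_{i_r}) = \pi(\tau')(\alpha_{i_r})$, since $s_{i_r}(\alpha_{i_r}) = -\alpha_{i_r}$. Hence $[\,{-\pi(\tau)(\alpha_{i_r})}>_A 0\,] = [\,\pi(\tau')(\alpha_{i_r}) >_A 0\,]$, and the displayed identity becomes the trivial statement $2\cdot[x>_A0] = 1 + (\text{$+1$ if $x>_A0$, $-1$ if $x<_A0$})$ with $x=\pi(\tau')(\alpha_{i_r})$. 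This closes the induction. I would also remark that nowhere do we need $\tau$ itself to be reduced — only $\sigma$ — which is why the statement is clean; the reducedness of $\sigma$ is used solely to identify $\ell(\pi(\sigma))$ with $r$ and to guarantee each initial segment is reduced so the induction hypothesis applies.
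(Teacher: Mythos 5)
Your proof is correct and is essentially the paper's argument: the paper applies Lemma~\ref{L:ell-A}(1) successively along the partial products $e,\tau_1,\tau_1\tau_2,\dotsc$ to get $\ell_A(\pi(\tau))=\zeta_\tau^+-\zeta_\tau^-$ and adds this to $\ell(\pi(\sigma))=\zeta_\tau^++\zeta_\tau^-+\kappa_\tau$, which is exactly your induction on the last slot written as a telescoping sum (including the same sign observation $-\tau_1\cdots\tau_k(\alpha_{\sigma_k})=\tau_1\cdots\tau_{k-1}(\alpha_{\sigma_k})$).
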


\begin{proof}
	Applying \eqref{E:ellA-wsi} successively to the Weyl group elements in the sequence $e, \tau_1, \tau_1\tau_2, \dotsc$, we deduce that $\ell_A(\pi(\tau))=\zeta_\tau^+-\zeta_\tau^-$. We add this to $\ell(\pi(\sigma))=\zeta_\tau^++\zeta_\tau^-+\kappa_\tau$ to obtain \eqref{E:kappa+2tau}.
\end{proof}

\begin{Remark}
	When $A=-A_0$, formula \eqref{E:kappa+2tau} reduces to $\kappa_\tau+2\zeta_\tau^+=\ell(\pi(\sigma))-\ell(\pi(\tau))$. This is \cite[Lemma 5.1]{Deo}.
\end{Remark}

\subsubsection{Length for the periodic orientation in affine type}

Suppose that $W=Q^\vee_\circ\rtimes W_\circ$ is the Weyl group of an irreducible Kac-Moody root system of untwisted affine type. Here $W_\circ$ and $Q^\vee_\circ$ are the Weyl group and coroot lattice of a finite type Kac-Moody algebra $\fg_\circ$. Let $2\rho$ be the sum of all positive roots of $\fg_\circ$.

Let us denote an arbitrary element $x=(\lambda,u)\in W$ by $\varpi^\lambda u$. For such $x$ we write $\wt(x)=\lambda\in Q_\circ^\vee$ and $\dir(x)=u\in W_\circ$. The elements $\varpi^\lambda=\varpi^\lambda e\in W$ for $\lambda\in Q^\vee_\circ$ pairwise commute and are called translation elements.
\begin{Lemma}\label{L:A-inf-length}
	For $W=Q^\vee_\circ\rtimes W_\circ$, we have
	\begin{align}
	\label{E:A-inf-length}
	\ell_{A_\infty}(\varpi^\lambda u) &=-\langle\lambda,2\rho\rangle+\ell(u)\\
	\label{E:A-minf-length}
          \ell_{A_{-\infty}}(\varpi^\lambda u) &=\langle\lambda,2\rho\rangle-\ell(u)
        \mathperiod
	\end{align}
\end{Lemma}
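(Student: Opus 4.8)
The plan is to reduce the two identities to the first property of the length function listed in Lemma~\ref{L:ell-A}, together with the translation structure of $W$. Since $\ell_{A_{-\infty}} = \ell_{-A_\infty} = -\ell_{A_\infty}$ by part (3) of Lemma~\ref{L:ell-A}, formula \eqref{E:A-minf-length} follows immediately from \eqref{E:A-inf-length}, so it suffices to prove the latter.

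First I would handle the directional part. For $u \in W_\circ$, write a reduced expression $u = s_{i_1}\cdots s_{i_\ell}$ with $\ell = \ell(u)$, all simple reflections in the finite part. Applying \eqref{E:ellA-wsi} successively along $e, s_{i_1}, s_{i_1}s_{i_2},\dots$, each step changes $\ell_{A_\infty}$ by $\pm 1$ according to the sign $\sgn_{A_\infty}$ of the corresponding root $s_{i_1}\cdots s_{i_{k-1}}(\alpha_{i_k})$. Each such root is an inversion of $u$ and hence a positive root of $\fg_\circ$ (no $\delta$-shift since we stay inside $W_\circ$ and use simple reflections of the finite part); by definition of the periodic orientation $A_\infty$, a root $\alpha + n\delta$ with $\alpha \in (\Delta_\circ)_+$ satisfies $\alpha + n\delta >_{A_\infty} 0$. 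Thus every step contributes $+1$, giving $\ell_{A_\infty}(u) = \ell(u)$. Combined with $\ell_{A_\infty}(e) = 0$ (part (2)), this establishes the formula on $W_\circ$, where $\langle \lambda, 2\rho\rangle = 0$.

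Next I would establish the translation part by showing $\ell_{A_\infty}(\varpi^\lambda) = -\langle \lambda, 2\rho\rangle$ directly from the defining sum $\ell_{A_\infty}(\varpi^\lambda) = \sum_{\alpha \in \Inv(\varpi^{-\lambda})} \sgn_{A_\infty}(\alpha)$, where $\Inv(\varpi^{-\lambda}) = \Delta_+ \cap \varpi^\lambda(\Delta_-)$. The translation $\varpi^\lambda$ sends $\alpha + n\delta \mapsto \alpha + (n - \langle \lambda, \alpha\rangle)\delta$, so an affine root $\alpha + n\delta$ (with $\alpha \in \Delta_\circ$, $n \in \ZZ$) lies in this inversion set precisely when it is positive and its $\varpi^\lambda$-preimage is negative; the sign $\sgn_{A_\infty}(\alpha + n\delta)$ depends only on the sign of $\alpha$ in $\Delta_\circ$. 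For each fixed $\alpha \in (\Delta_\circ)_+$, a count of the integers $n$ for which $\alpha+n\delta \in \Inv(\varpi^{-\lambda})$ weighted by $\sgn_{A_\infty}$ produces $-\langle \lambda^+, \alpha\rangle$ on the $\alpha$-string and the $-\alpha$-string combined (here one tracks the usual finite-field-style counting: the affine roots over $\pm\alpha$ contribute an arithmetic progression of length $|\langle\lambda,\alpha\rangle|$, all of one $A_\infty$-sign, matched against the opposite finite sign, yielding the signed total $-\langle\lambda,\alpha\rangle$). Summing over $\alpha \in (\Delta_\circ)_+$ gives $-\langle \lambda, 2\rho\rangle$.

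Finally I would combine the two pieces. Using part (1) of Lemma~\ref{L:ell-A} to pass from $\varpi^\lambda$ to $\varpi^\lambda u$ along a reduced expression for $u$, each step adds $\sgn_{A_\infty}(\varpi^\lambda s_{i_1}\cdots s_{i_{k-1}}(\alpha_{i_k}))$. The relevant roots are $\varpi^\lambda(\gamma_k)$ where $\gamma_k \in (\Delta_\circ)_+$ as above; since $\varpi^\lambda$ only shifts the $\delta$-coefficient and leaves the finite part $\gamma_k \in (\Delta_\circ)_+$ unchanged, each has $A_\infty$-sign $+1$, so the $u$-part again contributes $+\ell(u)$, independent of $\lambda$. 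Hence $\ell_{A_\infty}(\varpi^\lambda u) = -\langle\lambda, 2\rho\rangle + \ell(u)$, as claimed. The main obstacle is the bookkeeping in the third paragraph: correctly identifying which affine roots $\alpha + n\delta$ lie in $\Inv(\varpi^{-\lambda})$ and verifying that the signed count over each root string of $\fg_\circ$ collapses to $-\langle\lambda,\alpha\rangle$; everything else is a routine application of Lemma~\ref{L:ell-A}.
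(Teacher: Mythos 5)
Your proof is correct, and it is organized differently from the paper's. The paper proves \eqref{E:A-minf-length} in a single stroke: it computes the full inversion set $\Inv(u^{-1}\varpi^{-\lambda})$ of the element $\varpi^\lambda u$, which forces a four-case analysis (according to the signs of $\alpha$ and $u^{-1}(\alpha)$, with the strict/non-strict inequalities on $r$ shifting accordingly), and then evaluates the signed sum, absorbing the $\pm 1$ corrections coming from roots flipped by $u$ into the $-\ell(u)$ term. You instead factor the computation: the pure translation $\ell_{A_\infty}(\varpi^\lambda)=-\langle\lambda,2\rho\rangle$ is done by the inversion-set count (only two cases, one per sign of the finite part), and the finite Weyl group part is handled by iterating \eqref{E:ellA-wsi} along a reduced word for $u$, observing that the roots $\varpi^\lambda s_{i_1}\cdots s_{i_{k-1}}(\alpha_{i_k})$ have finite part a positive root of $\Delta_\circ$ and are therefore $A_\infty$-positive regardless of $\lambda$; you then get \eqref{E:A-minf-length} from part (3) of Lemma~\ref{L:ell-A}, whereas the paper goes in the opposite direction. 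What your route buys is less bookkeeping (the boundary corrections depending on $\sgn(u^{-1}(\alpha))$ never arise), and it makes transparent the underlying additivity $\ell_{A_\infty}(\varpi^\lambda u)=\ell_{A_\infty}(\varpi^\lambda)+\ell_{A_\infty}(u)$, which is really an instance of part (4) of Lemma~\ref{L:ell-A} together with the fact that translations fix the periodic orientation; the paper's all-at-once computation is more self-contained in that it uses only the definition of $\ell_A$. Two cosmetic points: the stray ``$\lambda^+$'' in your third paragraph should just be $\lambda$, and the phrase ``matched against the opposite finite sign'' is vague --- the clean statement is that for each $\alpha\in(\Delta_\circ)_+$ exactly one of the strings over $\pm\alpha$ contributes, with $|\langle\lambda,\alpha\rangle|$ terms all of $A_\infty$-sign $-\sgn\langle\lambda,\alpha\rangle$, giving $-\langle\lambda,\alpha\rangle$ in every case (including $\langle\lambda,\alpha\rangle=0$).
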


\begin{proof}
	The two formulas are equivalent, so let us prove the second formula. Let $\Delta=\{\alpha+r\delta\}$ be the set of affine roots as in Example~\ref{eg:affine}. We have the following formula for the action of $u^{-1}\varpi^{-\lambda}$ on affine roots
	\begin{align}
	u^{-1}\varpi^{-\lambda}(\alpha+r\delta)&=u^{-1}(\alpha)+(r+\langle\lambda,\alpha\rangle)\delta
        \mathperiod                                                 
	\end{align}
	Thus $\alpha+r\delta\in\Inv(u^{-1}\varpi^{-\lambda})$ if and only if one of the following holds:
	\begin{align}
	&\text{$\alpha>0$, $u^{-1}(\alpha)>0$, and $0\leq r < -\langle\lambda,\alpha\rangle$}\\
	&\text{$\alpha>0$, $u^{-1}(\alpha)<0$, and $0\leq r \leq -\langle\lambda,\alpha\rangle$}\\
	&\text{$\alpha<0$, $u^{-1}(\alpha)>0$, and $0< r < -\langle\lambda,\alpha\rangle$}\\
	&\text{$\alpha<0$, $u^{-1}(\alpha)<0$, and $0< r \leq -\langle\lambda,\alpha\rangle$}
        \mathperiod
	\end{align}
	Therefore,
	\begin{align}
	\ell_{A_{-\infty}}(\varpi^\lambda u)&=\sum_{\substack{\alpha\in\Delta_\circ\\-\langle\lambda,\alpha\rangle>0}}-\sgn(\alpha)\left(-\langle\lambda,\alpha\rangle+\begin{cases}1 &\text{if $\alpha>0$ and $u^{-1}(\alpha)<0$}\\-1&\text{if $\alpha<0$ and $u^{-1}(\alpha)>0$}\end{cases}\right)\\
	&\qquad-\#\{\alpha\in\Delta_\circ : \text{$\alpha>0$, $u^{-1}(\alpha)<0$, and $\langle\lambda,\alpha\rangle=0$}\}\notag\\
                                            &=\langle\lambda,2\rho\rangle-\ell(u)
        \mathperiod
        \qedhere
	\end{align} 
\end{proof}

\section{Hecke algebras and walk algebras}

\subsection{Walk algebras}

Following \cite{Ram}, we define the {\it walk algebra} $\cA$ to be the unital $\ZZ$-algebra generated by symbols $\{ c_i^+, c_i^-, f_i^+, f_i^- \}_{i \in I}$ subject to the following relations for each $i \in I$
\begin{align}
  \label{eq:1}
 c^+_i  &= c^-_i + f^+_i \\
  \label{eq:1'}
  c^-_i  &= c^+_i + f^-_i
\mathperiod
\end{align}
We note that for all $i\in I$
\begin{align}
\label{eq:1''}
f_i^-=-f_i^+
\mathperiod
\end{align}

\begin{Remark}
	In affine Kac-Moody type, \cite{Ram} interprets the algebra $\cA$ in terms of alcove walks. The $c^{\pm}_i$ (resp. the $f^{\pm}_i$) generators correspond to crossing (resp. folding along) the $i$-wall of an alcove. For this reason, in this case $\cA$ is called the {\it alcove} walk algebra in \cite{Ram}.
\end{Remark}

\subsection{Hecke algebras}

Let $\cH$ be the Hecke algebra corresponding to $W$. Explicitly, $\cH$ is the unital $\ZZ[v,v^{-1}]$-algebra generated by symbols $\{T_i\}_{i \in I}$ subject to the braid relations and the following quadratic relations
\begin{align}
  \label{eq:3}
  (T_i - v) ( T_i + v^{-1}) = 0, \quad i\in I
  \mathperiod
\end{align}
We immediately have $T_i^{-1} = T_i - (v - v^{-1})$.
For each $w \in W$, we define
\begin{align}
  \label{eq:4}
 T_w = T_{i_1} \cdots T_{i_r} 
\end{align}
where $\sigma = (s_{i_1}, \dotsc, s_{i_r})$ is a reduced expression for $w$.
Following \cite{Ram}, we have the following.
\begin{Proposition}
There is a surjective $\ZZ$-algebra map  
\begin{align}
  \label{eq:5}
  \Phi: \cA \rightarrow \cH
\end{align}
given by
\begin{align}
  \label{eq:6}
  c_i^{\pm} &\mapsto T_i^{\pm 1} \\
  f_i^{\pm} &\mapsto \pm (v - v^{-1})
\end{align}
for all $i \in I$.
\end{Proposition}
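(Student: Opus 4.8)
The plan is to verify that $\Phi$ is well-defined by checking it respects the defining relations of $\cA$, then observe surjectivity and the compatibility with \eqref{eq:1''}.

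First I would check that the proposed assignment on generators extends to a $\ZZ$-algebra homomorphism. Since $\cA$ is the free unital $\ZZ$-algebra on the symbols $\{c_i^+,c_i^-,f_i^+,f_i^-\}_{i\in I}$ modulo the relations \eqref{eq:1} and \eqref{eq:1'}, it suffices to send each generator to the indicated element of $\cH$ and verify that the images satisfy the two families of relations. For \eqref{eq:1}, I must check $T_i = T_i^{-1} + (v-v^{-1})$ in $\cH$, which is exactly the rearrangement $T_i - T_i^{-1} = v - v^{-1}$ of the identity $T_i^{-1} = T_i - (v-v^{-1})$ noted immediately after \eqref{eq:3}. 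For \eqref{eq:1'}, I must check $T_i^{-1} = T_i + (-(v-v^{-1}))$, which is the same identity read the other way. So both relations hold, and $\Phi$ is a well-defined $\ZZ$-algebra map. (One should note $\cH$ is viewed as a $\ZZ$-algebra here by restriction of scalars, so that the scalars $\pm(v-v^{-1})$ are legitimate targets.)

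Next I would record that the relation \eqref{eq:1''}, $f_i^- = -f_i^+$, is consistent: indeed $\Phi(f_i^-) = -(v-v^{-1}) = -\Phi(f_i^+)$, as it must be since \eqref{eq:1''} is a consequence of \eqref{eq:1} and \eqref{eq:1'} (subtracting them gives $f_i^+ + f_i^- = 0$) and hence already holds in $\cA$.

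Finally, surjectivity: the image of $\Phi$ contains each $T_i = \Phi(c_i^+)$, and the $T_i$ together with the scalars $\ZZ[v,v^{-1}]$ generate $\cH$ as a ring. The scalars lie in the image because $v - v^{-1} = \Phi(f_1^+)$ for any fixed $i\in I$ — more care is needed here: $\Phi$ is only a $\ZZ$-algebra map, so I get $\ZZ[v-v^{-1}]$ in the image from the $f_i^+$, but then $v^{\pm 1}$ themselves arise since $T_i + T_i^{-1}$ has image $2T_i - (v-v^{-1})$... actually the cleanest argument is: the subring generated by the $T_i^{\pm 1}$ alone already contains $v - v^{-1} = T_i - T_i^{-1}$, and one checks directly from the quadratic relation that $v$ and $v^{-1}$ lie in the $\ZZ$-subalgebra generated by $T_i$ and $T_i^{-1}$ (for any single $i$), whence the image is all of $\cH$. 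I expect the main obstacle to be purely bookkeeping: making sure the claim ``$\ZZ$-algebra map'' is compatible with $\cH$ being a $\ZZ[v,v^{-1}]$-algebra, i.e.\ confirming that the scalars are genuinely generated over $\ZZ$ by the $T_i^{\pm1}$; everything else is an immediate translation of the relation $T_i^{-1} = T_i - (v-v^{-1})$.
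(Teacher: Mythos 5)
The paper gives no proof of this proposition (it is quoted from Ram as immediate), and your well-definedness check is exactly the intended content: both \eqref{eq:1} and \eqref{eq:1'} are sent to the rearranged quadratic relation $T_i-T_i^{-1}=v-v^{-1}$, which is the identity $T_i^{-1}=T_i-(v-v^{-1})$ recorded after \eqref{eq:3}, and the consistency with \eqref{eq:1''} is as you say. So the existence of $\Phi$ is fine.

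The surjectivity step, however, contains a genuine error, and it is precisely at the point you yourself flagged as delicate. The claim that ``$v$ and $v^{-1}$ lie in the $\ZZ$-subalgebra generated by $T_i$ and $T_i^{-1}$'' is false. Writing $a=v-v^{-1}$, the $\ZZ$-subalgebra of $\cH$ generated by all $T_i^{\pm1}$ is contained in (indeed equals) $\bigoplus_{w\in W}\ZZ[a]\,T_w$, since $T_i^{-1}=T_i-a$ and the structure constants of the $T_w$-basis lie in $\ZZ[a]$; and $v\notin\ZZ[a]$, because the ring automorphism of $\ZZ[v,v^{-1}]$ determined by $v\mapsto -v^{-1}$ fixes $a$, hence fixes $\ZZ[a]$ pointwise, but does not fix $v$. (Already in rank one, the image is $\ZZ[a]\oplus\ZZ[a]T_i$, which misses $v$.) So a strictly $\ZZ$-linear map with the stated values on generators is \emph{not} surjective onto $\cH$ viewed with its $\ZZ[v,v^{-1}]$-scalars. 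The statement is to be read as in Ram, with scalars handled from the start: either take the walk algebra (equivalently, the map $\Phi$) to be $\ZZ[v,v^{-1}]$-linear, in which case surjectivity is immediate because the $T_i$ generate $\cH$ over $\ZZ[v,v^{-1}]$, or interpret surjectivity onto the $\ZZ[v,v^{-1}]$-span of the image, which is all that is used later (e.g.\ in \eqref{eq:001}). The repair is thus a change of framing rather than of your relation-checking, but the specific argument you propose for producing $v$ and $v^{-1}$ from the $T_i^{\pm1}$ over $\ZZ$ cannot be carried out.
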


Ram explicitly describes the kernel of this map in the affine case, but since it is not necessary for our purposes we will not reproduce it here.

\subsection{Oriented bases}

Fix an orientation $A$. Let $\sigma$ be an expression of length $r$, and let $\tau$ be a subexpression. We define 
\begin{align}
L(\tau \leq \sigma, A) = x_{1} \cdots x_{r}\in\cA
\end{align}
where $x_{k} = c_{\sigma_k}^{\epsilon_k}$ if $\tau_k = \sigma_k$ and $x_{k} = f_{\sigma_k}^{\epsilon_k}$ if $\tau_k = e$; the sign $\epsilon_k$ is defined by
\begin{align}
  \epsilon_k = \sgn_{A}( -\tau_1 \cdots \tau_{k} (\alpha_{\sigma_k}) )
  \mathperiod
\end{align}
We also define
\begin{align}
  \label{eq:33}
  T(\tau \leq \sigma, A) = \Phi\left( L(\tau \leq \sigma, A) \right)\in\cH
  \mathperiod
\end{align}
To simplify notation, we will write $L(\sigma \leq \sigma, A) = L(\sigma, A)$ and $T(\sigma \leq \sigma, A) = T(\sigma, A)$.

\begin{Remark}\label{R:alcove-walks}
  In affine Kac-Moody type, expressions $\sigma$ exactly correspond to ``unfolded alcove walks'', and subexpressions $\tau \leq \sigma$ correspond to foldings of the alcove walk $\sigma$. Ram considers the orientation $A_{-\infty}$, and in Ram's terminology, $L(\tau \leq \sigma, A_{-\infty})$ is a word in $\cA$ corresponding to the ``alcove walk'' $\tau \leq \sigma$. Our definition is the natural generalization of his to arbitrary orientations.
\end{Remark}

Immediately from the definition of $L(\tau\leq\sigma,A)$, we have the following recurrence relation for these elements:
\begin{Proposition}
Fix an orientation $A$. Let $\sigma = (\sigma_1, \dotsc, \sigma_r)$ be an expression of length $r$, and let $\tau = (\tau_1, \dotsc, \tau_r)$ be a subexpression of $\sigma$. Then we have: 
  \label{prop:simple-L-recursion}
  \begin{align}
    \label{eq:13}
    L(\tau \leq \sigma, A) =  L(\tau_1 \leq \sigma_1, A) L((\tau_2, \dotsc, \tau_r) \leq (\sigma_2, \dotsc, \sigma_r),  \tau_1^{-1}(A))
    \mathperiod
  \end{align}
\end{Proposition}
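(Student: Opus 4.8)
The plan is to unwind the definition of $L(\tau\leq\sigma,A)$ as a product $x_1\cdots x_r$ and peel off the first factor, being careful to track how the orientation changes when we shift from indexing by $(\sigma_1,\dotsc,\sigma_r)$ to indexing by $(\sigma_2,\dotsc,\sigma_r)$. Write $L(\tau\leq\sigma,A)=x_1\cdot(x_2\cdots x_r)$. The first factor $x_1$ is $c_{\sigma_1}^{\epsilon_1}$ if $\tau_1=\sigma_1$ and $f_{\sigma_1}^{\epsilon_1}$ if $\tau_1=e$, where $\epsilon_1=\sgn_A(-\tau_1(\alpha_{\sigma_1}))$; by inspection this is exactly $L(\tau_1\leq\sigma_1,A)$ (the length-one case of the definition). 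So it remains to identify $x_2\cdots x_r$ with $L((\tau_2,\dotsc,\tau_r)\leq(\sigma_2,\dotsc,\sigma_r),\tau_1^{-1}(A))$.

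First I would set up notation for the truncated data: let $\sigma'=(\sigma_2,\dotsc,\sigma_r)$, $\tau'=(\tau_2,\dotsc,\tau_r)$, and $A'=\tau_1^{-1}(A)$. By definition, $L(\tau'\leq\sigma',A')=x_2'\cdots x_r'$ where for $k\in\{2,\dotsc,r\}$ the factor $x_k'$ is $c_{\sigma_k}^{\epsilon_k'}$ or $f_{\sigma_k}^{\epsilon_k'}$ according to whether $\tau_k=\sigma_k$ or $\tau_k=e$ (the same alternative as in the original), and the sign is $\epsilon_k'=\sgn_{A'}(-\tau_2\cdots\tau_k(\alpha_{\sigma_k}))$. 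Comparing with the original definition, the factor $x_k$ in $L(\tau\leq\sigma,A)$ has sign $\epsilon_k=\sgn_A(-\tau_1\tau_2\cdots\tau_k(\alpha_{\sigma_k}))$. So the entire content of the claim reduces to the sign identity
\begin{align}
\sgn_A(-\tau_1\tau_2\cdots\tau_k(\alpha_{\sigma_k})) = \sgn_{\tau_1^{-1}(A)}(-\tau_2\cdots\tau_k(\alpha_{\sigma_k}))
\end{align}
for all relevant $k$, i.e.\ that $\epsilon_k=\epsilon_k'$; the $c$-versus-$f$ alternative is literally the same in both expressions, so matching the signs is all that is needed.

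This sign identity is immediate from the very definition of the Weyl group action on orientations given in the first Example: for any root $\beta$ and any $x\in W$, one has $\beta>_{x(A)}0$ iff $x^{-1}(\beta)>_A 0$, hence $\sgn_{x(A)}(\beta)=\sgn_A(x^{-1}(\beta))$. Applying this with $x=\tau_1$ and $\beta=-\tau_2\cdots\tau_k(\alpha_{\sigma_k})$ gives exactly the displayed equality (using $\tau_1^{-1}(-\tau_2\cdots\tau_k(\alpha_{\sigma_k}))=-\tau_1\tau_2\cdots\tau_k(\alpha_{\sigma_k})$ since $\tau_1$ acts linearly). I do not anticipate a genuine obstacle here — the proposition is essentially a bookkeeping statement — but the one point requiring care is the reindexing of the partial products: in $L(\tau'\leq\sigma',A')$ the $k$-th slot (for $k\geq 2$) uses the partial product $\tau_2\cdots\tau_k$ starting at index $2$, whereas in $L(\tau\leq\sigma,A)$ it uses $\tau_1\tau_2\cdots\tau_k$ starting at index $1$, and it is precisely the replacement $A\rightsquigarrow\tau_1^{-1}(A)$ that absorbs the missing leading factor $\tau_1$. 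Once this is observed, the proof is a one-line invocation of the definition of $x(A)$.
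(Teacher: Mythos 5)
Your argument is correct and matches the paper's treatment: the paper states this proposition as following immediately from the definition of $L(\tau\leq\sigma,A)$, and your write-up simply spells out that bookkeeping, with the key sign identity $\sgn_{\tau_1^{-1}(A)}(-\tau_2\cdots\tau_k(\alpha_{\sigma_k}))=\sgn_A(-\tau_1\cdots\tau_k(\alpha_{\sigma_k}))$ coming straight from the definition of the $W$-action on orientations (and harmlessly using $\tau_1=\tau_1^{-1}$ since $\tau_1\in\{e,\sigma_1\}$).
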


The following is \cite[Lemma 3.1]{Ram} for the periodic orientation (see also \cite[Lemma 2.4.2]{Goertz}), and it holds in general by the same argument.
\begin{Proposition}
  The set of $\{ L(\tau \leq \sigma, A) \}$ form a basis of $\cA$ as we vary over all expressions $\sigma$ and subexpressions $\tau \leq \sigma$.
\end{Proposition}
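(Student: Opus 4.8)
The plan is to show that $\{L(\tau\le\sigma,A)\}$ spans $\cA$ over $\ZZ$ and that the count of these elements in each "degree" matches the dimension of a corresponding filtration piece, so that spanning forces linear independence. First I would fix a length $r$ and work in the subspace $\cA_{\le r}$ spanned by all length-$\le r$ words in the generators $\{c_i^\pm, f_i^\pm\}$. Using the relations \eqref{eq:1}, \eqref{eq:1'}, and \eqref{eq:1''}, every generator $c_i^+$ can be rewritten in terms of $c_i^-$ and $f_i^+$ (or vice versa), and every $f_i^-$ in terms of $f_i^+$; so $\cA$ is spanned by words in $\{c_i^-, f_i^+\}_{i\in I}$, and more generally by words of the form $x_1\cdots x_r$ where each $x_k$ is a $c$ or an $f$ with a prescribed sign determined by any rule we like. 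The point of the orientation $A$ is that it provides a \emph{consistent} such rule: given an expression $\sigma$, the sign $\epsilon_k=\sgn_A(-\tau_1\cdots\tau_k(\alpha_{\sigma_k}))$ is a well-defined function of the partial product, and the recurrence in Proposition~\ref{prop:simple-L-recursion} shows the construction is compatible with factoring off the first letter.

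The key step is the spanning argument. I would argue by induction on $r$: any word $w = y_1\cdots y_r$ with $y_k\in\{c_{i_k}^\pm, f_{i_k}^\pm\}$ determines an underlying expression $\sigma=(s_{i_1},\dots,s_{i_r})$ and a subexpression $\tau$ (put $\tau_k=\sigma_k$ if $y_k$ is a $c$, and $\tau_k=e$ if $y_k$ is an $f$). Using \eqref{eq:1}--\eqref{eq:1''} I can replace $y_1$ by $x_1 := L(\tau_1\le\sigma_1,A)$ up to adding a word of strictly shorter length (namely: if $y_1$ is $c_{i_1}^{\pm}$ with the "wrong" sign for $A$, rewrite via $c_i^+ = c_i^- + f_i^+$ to trade it for the right-signed $c$ plus an $f$-term, which \emph{shortens} since the $f$ absorbs into position but—crucially—one must check the bookkeeping so that the error term has length $< r$; if $y_1$ is an $f$ with the wrong sign, just use $f_i^- = -f_i^+$). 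Then by Proposition~\ref{prop:simple-L-recursion} and the inductive hypothesis applied to the tail with orientation $\tau_1^{-1}(A)$, the remaining factor $y_2\cdots y_r$ is a $\ZZ$-combination of $L(\tau'\le\sigma',\tau_1^{-1}(A))$'s, and multiplying back by $x_1$ gives a combination of $L(\tau\le\sigma,A)$'s of length $r$, plus lower-length error terms handled by induction on $r$. Hence the $L(\tau\le\sigma,A)$ span $\cA$.

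For linear independence I would use a dimension/grading count. Filter $\cA$ by word length; in the associated graded, the relations \eqref{eq:1}--\eqref{eq:1''} become: $c_i^+ \equiv c_i^-$ and $f_i^+ \equiv -f_i^-$ in degree $1$, and no relations identifying $c$'s with $f$'s or imposing anything in higher degree (this is the content of "forgetting the braid relations and weakening the quadratic relations" — $\cA$ is, up to the degree-one identifications, a free algebra on $2|I|$ generators). Concretely, $\gr\cA$ in degree $r$ has a basis indexed by words of length $r$ in $2|I|$ symbols, i.e. by pairs $(\sigma,\tau)$ with $\sigma\in S^r$ and $\tau\le\sigma$, of which there are $|I|^r\cdot 2^r$. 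The set $\{L(\tau\le\sigma,A)\}$ of length exactly $r$ has precisely this cardinality, and each maps in $\gr\cA$ to $\pm$ the monomial determined by $(\sigma,\tau)$ — so they form a basis of $\gr\cA$ in each degree, which together with spanning gives that they form a basis of $\cA$. The main obstacle is making the spanning induction's length-bookkeeping airtight: one must verify that rewriting a wrong-signed $c$ via $c_i^+ = c_i^- + f_i^+$ genuinely produces an error term of \emph{strictly smaller} total length (not just "at most $r$"), which requires care since naively the term $f_i^+(y_2\cdots y_r)$ still has length $r$ — the resolution is to organize the induction on a finer invariant, e.g. lexicographically on (length, number of wrong-signed letters reading left to right), so that each rewriting step strictly decreases the invariant even when the length stays the same. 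With the induction set up on this refined statistic, the argument goes through.
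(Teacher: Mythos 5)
Your overall strategy (identify $\cA$ with a free algebra on $2|I|$ generators and compare the $L$'s with its monomial basis) is sound, and it is essentially the argument the paper delegates to \cite[Lemma 3.1]{Ram} and \cite[Lemma 2.4.2]{Goertz}; but two of your steps fail as written. First, the independence step: the relations \eqref{eq:1}--\eqref{eq:1''} are \emph{homogeneous} for word length (every generator is a single letter), so filtering by word length produces no collapse at all. In the word-length associated graded you do not have $c_i^+\equiv c_i^-$, and $L(\tau\le\sigma,A)$ does not map to $\pm$ a single monomial: already $c_i^-=c_i^+-f_i^+$ is a sum of two monomials in the free generators $\{c_i^+,f_i^+\}$, so a general $L(\tau\le\sigma,A)$ expands into $2^{\#\{k\,:\,\tau_k=\sigma_k,\ \epsilon_k=-1\}}$ monomials. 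Second, the spanning induction: as you yourself note, the error term $f_{i_1}^{\pm}(y_2\cdots y_r)$ has the same word length, and your refined invariant (length, number of wrong-signed letters) also fails to decrease on it: the length component never changes, and replacing the $c$ in position $1$ by an $f$ changes the underlying subexpression, hence all partial products $\tau_1\cdots\tau_k$, and can turn arbitrarily many later letters from right-signed to wrong-signed. So the rewriting process is not shown to terminate.

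Both problems disappear if you organize everything by the number of $c$-letters, equivalently by the partial order $\rho\le\tau$ on subexpressions of a fixed $\sigma$. Eliminating $c_i^-$ and $f_i^-$ via \eqref{eq:1} and \eqref{eq:1''} shows $\cA$ is the free $\ZZ$-algebra on $\{c_i^+,f_i^+\}_{i\in I}$, with monomial basis $M_{\sigma,\rho}$ indexed precisely by pairs $(\sigma,\rho)$ with $\rho\le\sigma$. Expanding each factor of $L(\tau\le\sigma,A)$ (using $c^-=c^+-f^+$ and $f^-=-f^+$) gives $L(\tau\le\sigma,A)=\pm M_{\sigma,\tau}+\sum_{\rho<\tau}(\pm)M_{\sigma,\rho}$, a transition matrix that is block-diagonal in $\sigma$ and triangular with diagonal entries $\pm1$ with respect to $\rho\le\tau$; this yields spanning and independence simultaneously, and your cardinality count $(2|I|)^r$ then does no extra work. (If you prefer your rewriting induction, the correct invariant is lexicographic on the number of $c$-letters first and the number of wrong-signed letters second; the error term drops the first component, the main term drops the second.) Alternatively, your left-to-right recursion gives spanning cleanly if the inductive hypothesis quantifies over all orientations: the first letter, whatever its sign, is already a $\ZZ$-combination of $L(e\le\sigma_1,A)$ and $L(\sigma_1\le\sigma_1,A)$, and Proposition~\ref{prop:simple-L-recursion} glues this onto the inductive expansion of the tail in the oriented basis for $\tau_1^{-1}(A)$, with no error term to track. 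Note finally that the paper gives no proof of this proposition; it cites Ram and G\"ortz, whose argument is exactly the triangularity above.
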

\noindent We will call a basis of $\cA$ of this form an {\it oriented basis}.
\begin{Remark}
  In \cite[Theorem 3.1.1]{Goertz}, it is shown that $T(\sigma, A)$ depends only on $\pi(\sigma)$ for any expression $\sigma$.
  For any $w\in W$ we may therefore unambiguously define $T(w, A)\in\cH$ to be $T(\sigma,A)$ for any expression $\sigma$ that $\pi(\sigma)=w$. Correspondingly, for any fixed $A$, the set $\{T(w,A) \suchthat w\in W\}$ is a basis for $\cH$, which we call an oriented basis. These oriented bases of the Hecke algebra $\cH$ appeared (with slightly different conventions) in earlier work of Dyer \cite{Dy} (see also \cite[\S9]{Dy2}).
\end{Remark}

\section{Change of basis formula}

In this section we study the change of basis between oriented bases of $\cA$. This gives rise to the notion of distinguished subexpressions.


\begin{Definition}
	Let $A$, $A'$ be orientations. Let $\sigma$ be an expression of length $r$, let $\tau$ be a subexpression of $\sigma$, and let $\rho$ be a subexpression of $\tau$. Let $K = \{ k \in [r] \suchthat \rho_k = e, \tau_k = \sigma_k \}$. Then we say $\rho$ is a {\it distinguished subexpression of $\tau$ (with respect to $A$ and $A'$)} if for all $k \in K$, we have
	\begin{align}
	\label{eq:7}
          \sgn_{A}( -\tau_1 \cdots \tau_k(\alpha_{\sigma_k}) ) = \sgn_{A'}(-\rho_1 \cdots \rho_k(\alpha_{\sigma_k}))
          \mathperiod
	\end{align}
	Let us write $\sD^{A'}_A(\tau \leq \sigma)$ for the set of $\rho$ satisfying the above conditions.
\end{Definition}

\begin{Remark}
	In the setting of Remark~\ref{R:alcove-walks}, distinguished subexpressions for $A=A_0$, $A'=A_{-\infty}$, and $\tau=\sigma$, correspond to ``positively folded alcove walks''.
\end{Remark}

\begin{Definition}
	Let $A$, $A'$ be orientations. Let $\sigma$ be an expression of length $r$, let $\tau$ be a subexpression of $\sigma$, and let $\rho$ be a subexpression of $\tau$. We define
	\begin{align}
	\label{eq:8}
          &\cK^{A'}_{A}(\rho\leq\tau\leq\sigma) \\
          \notag
          &= \{ k \in [r] \suchthat \rho_k = e, \tau_k = e, \sgn_{A}(- \tau_1 \cdots \tau_k( \alpha_{\sigma_k}) ) \neq \sgn_{A'}(- \rho_1 \cdots \rho_k( \alpha_{\sigma_k}) ) \}
         \mathperiod
	\end{align}

\end{Definition}


\begin{Theorem}
  \label{thm:change-of-basis-formula}
Let $A$ and $A'$ be any orientations, and let $\tau$ be a subexpression of an expression $\sigma$. Then we have
\begin{align}
L(\tau\leq\sigma, A) = \sum_{\rho \in \sD^{A'}_A\left(\tau\leq\sigma\right)} (-1)^{\# \sK_A^{A'}(\rho\leq\tau\leq\sigma)}L(\rho \leq \sigma, A')
\mathperiod
\end{align}
\end{Theorem}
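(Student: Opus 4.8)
The plan is to induct on the length $r$ of the expression $\sigma$, using the recurrence relation of Proposition~\ref{prop:simple-L-recursion}. The base case $r=0$ is trivial: both sides equal $1\in\cA$, and the only subexpression is the empty one, which is vacuously distinguished. For the inductive step, write $\sigma = (\sigma_1,\sigma')$ and $\tau = (\tau_1,\tau')$ where $\sigma' = (\sigma_2,\dotsc,\sigma_r)$ and $\tau' = (\tau_2,\dotsc,\tau_r)$. By Proposition~\ref{prop:simple-L-recursion},
\begin{align*}
L(\tau\leq\sigma,A) = L(\tau_1\leq\sigma_1,A)\, L(\tau'\leq\sigma',\tau_1^{-1}(A)),
\end{align*}
so I would apply the inductive hypothesis to the second factor, rewriting $L(\tau'\leq\sigma',\tau_1^{-1}(A))$ as a sum over $\rho'\in\sD^{A'}_{\tau_1^{-1}(A)}(\tau'\leq\sigma')$ of signed terms $L(\rho'\leq\sigma',A')$. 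It then remains to expand $L(\tau_1\leq\sigma_1,A)$ in terms of length-one oriented words for $A'$ and reassemble.

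The heart of the argument is the length-one analysis, which splits into two cases according to whether $\tau_1 = e$ or $\tau_1 = \sigma_1$. If $\tau_1 = e$, then $L(\tau_1\leq\sigma_1,A) = f_{\sigma_1}^{\epsilon_1}$ with $\epsilon_1 = \sgn_A(-\alpha_{\sigma_1})= -\sgn_A(\alpha_{\sigma_1})$, and since $f_i^+ = -f_i^-$ by \eqref{eq:1''}, one has $f_{\sigma_1}^{\epsilon_1} = \sgn_A(-\alpha_{\sigma_1})\,\sgn_{A'}(-\alpha_{\sigma_1})\cdot f_{\sigma_1}^{\epsilon_1'}$ where $\epsilon_1' = \sgn_{A'}(-\alpha_{\sigma_1})$; the scalar is $-1$ exactly when the two signs disagree, which is precisely the condition recorded in $\cK^{A'}_A$ at index $1$. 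In this case $\rho_1$ is forced to be $e$ and the index $1$ contributes to $\cK^{A'}_A$ iff those signs disagree, so the bookkeeping matches. If $\tau_1 = \sigma_1$, then $L(\tau_1\leq\sigma_1,A) = c_{\sigma_1}^{\epsilon_1}$; here I use the defining relations \eqref{eq:1}–\eqref{eq:1'} to write $c_{\sigma_1}^{\epsilon_1} = c_{\sigma_1}^{\epsilon_1'} + (\text{a correction }f\text{-term, present only if }\epsilon_1\neq\epsilon_1')$. The term $c_{\sigma_1}^{\epsilon_1'}$ corresponds to keeping $\rho_1 = \sigma_1$, in which case index $1$ lies in the set $K$ of the distinguished-subexpression definition only when $\rho_1 = e$, so it imposes no constraint, consistent with \eqref{eq:7}; the correction term corresponds to $\rho_1 = e$, and the condition for it to appear is exactly the equation \eqref{eq:7} at $k=1$ for a distinguished $\rho$ with $\rho_1=e$.

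The remaining work is to check that after substituting the inductive expansion of the second factor and distributing, the orientations match up and the index sets transform correctly. The key compatibility is the cocycle-type behavior of orientations under the conjugation action together with the identities in Lemma~\ref{L:ell-A}: the sign conditions $\sgn_{\tau_1^{-1}(A)}(-\tau_2\cdots\tau_k(\alpha_{\sigma_k}))$ appearing in the inner distinguished-subexpression set unwind, via $\beta>_{\tau_1^{-1}(A)}0 \iff \tau_1(\beta)>_A 0$, to the conditions $\sgn_A(-\tau_1\tau_2\cdots\tau_k(\alpha_{\sigma_k}))$ appearing in $\sD^{A'}_A(\tau\leq\sigma)$ for indices $k\geq 2$; likewise for $A'$ and the $\cK$-set. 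I would record this as a short lemma (or inline observation) before the induction. Then a distinguished $\rho\leq\tau$ for $(A,A')$ is exactly a choice of $\rho_1\in\{e,\sigma_1\}$ satisfying the $k=1$ constraint paired with a distinguished $\rho'\leq\tau'$ for $(\tau_1^{-1}(A), A')$ when $\tau_1=\sigma_1$ — or just with $\rho_1=e$ and $\rho'$ distinguished for $(A,A')$ when $\tau_1=e$ — and $\#\cK^{A'}_A(\rho\leq\tau\leq\sigma)$ is the $k=1$ contribution plus $\#\cK^{A'}_{\tau_1^{-1}(A)}(\rho'\leq\tau'\leq\sigma')$ (resp. $\#\cK^{A'}_A(\rho'\leq\tau'\leq\sigma')$). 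Collecting signs finishes the step.

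The main obstacle is the case management when $\tau_1 = \sigma_1$: one must simultaneously track three orientations ($A$ governing the left factor, $\tau_1^{-1}(A)$ governing the recursive right factor, and the target $A'$), verify that the correction $f$-term's sign and its appearance condition exactly reproduce the $k=1$ term of $(-1)^{\#\cK}$ and the $k=1$ clause of the distinguished condition, and confirm that no spurious terms arise (e.g., that $c^{\epsilon_1'}$ really has the $A'$-sign dictated by $\rho_1\cdots\rho_k$ at $k=1$, which holds because $\rho_1 = \sigma_1 = \tau_1$ so the relevant root is the same). Once the length-one dictionary is pinned down, the rest is a formal unwinding of the recursion.
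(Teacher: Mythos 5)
Your overall strategy (induction on $r$ via the one-step factorization of Proposition~\ref{prop:simple-L-recursion}, a sign dictionary for the first letter, and a matching recursion for the distinguished sets) is the same as the paper's, and your length-one analysis of the head is correct. The gap is in how you invoke the inductive hypothesis and reassemble. You expand the tail $L(\tau'\leq\sigma',\tau_1^{-1}(A))$ into terms $L(\rho'\leq\sigma',A')$, i.e.\ with target orientation $A'$ itself, and then multiply by the $A'$-expansion of the head. But by Proposition~\ref{prop:simple-L-recursion}, $L(\rho\leq\sigma,A')=L(\rho_1\leq\sigma_1,A')\,L(\rho'\leq\sigma',\rho_1^{-1}(A'))$, so in the branch $\rho_1=\tau_1=\sigma_1$ the tail factor of an $A'$-basis element must carry the signs $\sgn_{A'}(-\rho_1\rho_2\cdots\rho_k(\alpha_{\sigma_k}))$, not $\sgn_{A'}(-\rho_2\cdots\rho_k(\alpha_{\sigma_k}))$; hence $c_{\sigma_1}^{\epsilon_1'}\cdot L(\rho'\leq\sigma',A')$ is not of the form $L(\rho\leq\sigma,A')$, and ``collecting signs'' does not finish. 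The same twist is missing from your combinatorial claim: for $\rho_1=\tau_1=\sigma_1$, condition \eqref{eq:7} at $k\geq 2$ involves $\sgn_{A'}(-\sigma_1\rho_2\cdots\rho_k(\alpha_{\sigma_k}))$, so the correct inner pair is $(\tau_1^{-1}(A),\sigma_1^{-1}(A'))$, not $(\tau_1^{-1}(A),A')$; your ``likewise for $A'$'' unwinding is valid only when $\rho_1=e$. Already for $\sigma=(s_1,s_2)$ in type $A_2$ with $\tau=\sigma$ and $\rho=(s_1,e)$, the true condition compares $\sgn_A(-s_1s_2(\alpha_2))$ with $\sgn_{A'}(-s_1(\alpha_2))=\sgn_{A'}(-\alpha_1-\alpha_2)$, while yours compares it with $\sgn_{A'}(-\alpha_2)$; for $A'=s_2(A_0)$ these two $A'$-signs differ, so your proposed decomposition of $\sD^{A'}_A(\tau\leq\sigma)$ (and likewise of $\#\cK^{A'}_A$) is wrong in this branch.

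The missing idea is to pull back the \emph{target} orientation along with the source: for the terms with $\rho_1=\tau_1$ (your $c^{\epsilon_1'}$-term) the tail must be expanded by induction in the $\tau_1^{-1}(A')$-oriented basis, so that Proposition~\ref{prop:simple-L-recursion} recombines the product exactly into $L((\tau_1,\rho')\leq\sigma,A')$; only for the terms with $\rho_1=e$ (the whole $\tau_1=e$ case, and the correction $f$-term when $\tau_1=\sigma_1$) is $A'$ itself the right target. This is precisely what the paper's Proposition~\ref{prop:L-recursion} encodes, in particular the inner sets $\sD^{\tau_1^{-1}(A')}_{\tau_1^{-1}(A)}(\ttau\leq\tsigma)$ in \eqref{E:D-recursion}, and the corresponding $\cK$-counts must also be taken with respect to the pulled-back pair on the tail. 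With that correction your argument goes through and coincides with the paper's proof.
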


This is best understood in terms of repeated application of \eqref{eq:1}, \eqref{eq:1'}, and \eqref{eq:1''} to ``straighten'' each factor in $L(\tau\leq\sigma,A)$ from left to right. Let us demonstrate this with an example before proving Theorem~\ref{thm:change-of-basis-formula}.

\begin{eg}
	For $W=\langle s_1,s_2\rangle$, the Weyl group of $SL_3$, consider $A=s_1(A_0)$, $A'=s_2s_1(A_0)$, and $\tau=(s_1,s_2,e)\leq (s_1,s_2,s_1)=\sigma$. 
	We have
	\begin{align*}
	   &L(\tau\leq\sigma,A)\\&=c_1^-c_2^+f_1^-\\
	              &=(c_1^++f_1^-)c_2^+f_1^-\\
	              &=c_1^+c_2^+f_1^-+f_1^-c_2^+f_1^-\\
	              &=c_1^+(c_2^-+f_2^+)f_1^-+f_1^-(c_2^-+f_2^+)f_1^-\\
	              &=-c_1^+c_2^-f_1^+-c_1^+f_2^+f_1^+-f_1^-c_2^-f_1^++f_1^-f_2^+f_1^-\\
                  &=-L((s_1,s_2,e),A')-L((s_1,e,e),A')-L((e,s_2,e),A')+L((e,e,e),A')
                                \mathperiod
	\end{align*}
	In this example all subexpressions of $\tau$ are distinguished.
\end{eg}

\newcommand{\tsigma}{\tilde{\sigma}}
\newcommand{\ttau}{\tilde{\tau}}
\newcommand{\trho}{\tilde{\rho}}

The proof of Theorem~\ref{thm:change-of-basis-formula} is based on the following, which is straightforward.
\begin{Proposition}
  \label{prop:L-recursion}
Let $A$ and $A'$ be any orientations, and let $\tau$ be a subexpression of an expression $\sigma$ of length $r$. 
Let $\tsigma = (\sigma_2, \dotsc, \sigma_r)$ and $\ttau = (\tau_2, \dotsc, \tau_r)$. Consider three cases for this data:
\begin{align*}
&\text{{\rm Case A:} \quad $\sgn_{A}(\tau_1(\alpha_{\sigma_1})) = \sgn_{A'}(\tau_1(\alpha_{\sigma_1}))$}\\
&\text{{\rm Case B:} \quad $\sgn_{A}(\tau_1(\alpha_{\sigma_1})) \neq \sgn_{A'}(\tau_1(\alpha_{\sigma_1}))  \textand  \tau_1 = e$}\\
&\text{{\rm Case C:} \quad $\sgn_{A}(\tau_1(\alpha_{\sigma_1})) \neq \sgn_{A'}(\tau_1(\alpha_{\sigma_1}))  \textand  \tau_1 = \sigma_1$}
\end{align*}
We have
  \begin{align}
  \label{E:L-recursion}
L(\tau_1\leq\sigma_1, A) &=
    \begin{cases}
  L(\tau_1 \leq  \sigma_1, A') &\textif \mathrm{Case}\text{ }\mathrm{A} \\
  - L(\tau_1 \leq  \sigma_1, A')   &\textif \mathrm{Case}\text{ }\mathrm{B} \\
  L(\tau_1 \leq  \sigma_1, A')  +  L( e \leq  \sigma_1, A')  &\textif \mathrm{Case}\text{ }\mathrm{C}
 \end{cases}
\end{align}
and $\sD^{A'}_A(\tau\leq\sigma)$ is equal to
\begin{align}
 \label{E:D-recursion}
   &\begin{cases}
  \{ (\tau_1,\trho) \suchthat \trho \in \sD^{\tau_1^{-1}(A')}_{\tau_1^{-1}(A)}(\ttau \leq \tsigma) \}   &\textif \mathrm{Case}\text{ }\mathrm{A}\\
  \{ (\tau_1,\trho) \suchthat \trho \in \sD^{\tau_1^{-1}(A')}_{\tau_1^{-1}(A)}(\ttau \leq \tsigma) \}  &\textif \mathrm{Case}\text{ }\mathrm{B}\\
  \{ (\tau_1,\trho) \suchthat \trho \in \sD^{\tau_1^{-1}(A')}_{\tau_1^{-1}(A)}(\ttau \leq \tsigma) \} \,\bigsqcup\,\, \{ (e,\trho) \suchthat \trho \in \sD^{\tau_1^{-1}(A')}_{A}(\ttau \leq \tsigma) \}&\textif \mathrm{Case}\text{ }\mathrm{C}
 \end{cases} 
  \end{align}
where we write $\trho=(\rho_2,\dotsc,\rho_r)$.
\end{Proposition}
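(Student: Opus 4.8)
The plan is to verify the two claimed formulas directly from the definitions, reducing everything to a single ``first step'' analysis of $L(\tau_1\le\sigma_1,A)$ and then invoking the recurrence of Proposition~\ref{prop:simple-L-recursion} to peel off the remaining factors. The key observation is that $L(\tau_1\le\sigma_1,A)$ is a one-letter word whose generator is determined entirely by $\tau_1$ and by the single sign $\eps_1=\sgn_A(-\tau_1(\alpha_{\sigma_1}))$; since $\alpha_{\sigma_1}$ is a simple root and $s_{\sigma_1}$ permutes $\Delta\setminus\{\pm\alpha_{\sigma_1}\}$, we can keep careful track of how this sign changes when $A$ is replaced by $A'$.

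First I would prove \eqref{E:L-recursion}. There are two subcases for $\tau_1$. If $\tau_1=e$, then $L(e\le\sigma_1,A)=f^{\eps_1}_{\sigma_1}$ with $\eps_1=\sgn_A(-\alpha_{\sigma_1})=-\sgn_A(\alpha_{\sigma_1})$, and similarly $L(e\le\sigma_1,A')=f^{\eps_1'}_{\sigma_1}$ with $\eps_1'=-\sgn_{A'}(\alpha_{\sigma_1})$. Here $\tau_1(\alpha_{\sigma_1})=\alpha_{\sigma_1}$, so Case A ($\sgn_A(\alpha_{\sigma_1})=\sgn_{A'}(\alpha_{\sigma_1})$) gives $\eps_1=\eps_1'$ and hence equality, while Case B ($\sgn_A(\alpha_{\sigma_1})\ne\sgn_{A'}(\alpha_{\sigma_1})$, $\tau_1=e$) gives $\eps_1=-\eps_1'$, so $L(e\le\sigma_1,A)=f^{\eps_1}_{\sigma_1}=-f^{\eps_1'}_{\sigma_1}=-L(e\le\sigma_1,A')$ using \eqref{eq:1''}; Case C cannot occur when $\tau_1=e$. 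If $\tau_1=\sigma_1$, then $L(\sigma_1\le\sigma_1,A)=c^{\eps_1}_{\sigma_1}$ with $\eps_1=\sgn_A(-\sigma_1(\alpha_{\sigma_1}))=\sgn_A(\alpha_{\sigma_1})$, and likewise $\eps_1'=\sgn_{A'}(\alpha_{\sigma_1})$; in Case A these agree and we are done, and in Case C they differ, so (say) $\eps_1=+$ and $\eps_1'=-$: then $c^+_{\sigma_1}=c^-_{\sigma_1}+f^+_{\sigma_1}$ by \eqref{eq:1}, and one checks that on the right $c^-_{\sigma_1}=L(\sigma_1\le\sigma_1,A')$ while $f^+_{\sigma_1}=f^{\eps_1'(\text{with }\tau_1=e)}_{\sigma_1}$ indeed equals $L(e\le\sigma_1,A')$ (because for $\tau_1=e$ the sign in $A'$ is $-\sgn_{A'}(\alpha_{\sigma_1})=+$); the opposite sign case uses \eqref{eq:1'} symmetrically. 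This establishes \eqref{E:L-recursion}.

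Next I would prove \eqref{E:D-recursion} by unwinding the definition of $\sD^{A'}_A(\tau\le\sigma)$. A subexpression $\rho\le\tau$ has first slot $\rho_1\in\{e,\tau_1\}$, and $\rho_1$ can differ from $\tau_1$ only when $\tau_1=\sigma_1$ (then $\rho_1\in\{e,\sigma_1\}$), which is exactly where the extra summand in Case C comes from. For the index set $K$ in the definition: the slot $1$ lies in $K$ iff $\rho_1=e$ and $\tau_1=\sigma_1$, and in that situation the distinguishedness condition \eqref{eq:7} at $k=1$ reads $\sgn_A(-\sigma_1(\alpha_{\sigma_1}))=\sgn_{A'}(-\alpha_{\sigma_1})$, i.e.\ $\sgn_A(\alpha_{\sigma_1})=-\sgn_{A'}(\alpha_{\sigma_1})$ — which is precisely the hypothesis of Case C; so in Case A with $\tau_1=\sigma_1$ this extra branch is \emph{not} distinguished and is correctly absent. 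For the tail $\trho\le\ttau\le\tsigma$: writing $\rho_1\cdots\rho_k(\alpha_{\sigma_k})=\rho_1(\rho_2\cdots\rho_k(\alpha_{\sigma_k}))$ and using $\sgn_{A'}(\rho_1(\gamma))=\sgn_{\rho_1^{-1}(A')}(\gamma)$ from the definition of the $W$-action on orientations, condition \eqref{eq:7} for the remaining slots $k\ge2$ becomes exactly the distinguishedness condition for $\trho$ relative to the pair $(\,\tau_1^{-1}(A),\ \rho_1^{-1}(A')\,)$. When $\rho_1=\tau_1$ this pair is $(\tau_1^{-1}(A),\tau_1^{-1}(A'))$, giving the first branch in each case; when $\rho_1=e$ (possible only in Case C, where $\tau_1=\sigma_1$) the pair is $(A,\tau_1^{-1}(A'))$ — but one checks using the hypothesis of Case C and Lemma~\ref{L:ell-A}-style sign bookkeeping that $\sgn_A$ and $\sgn_{\sigma_1^{-1}(A)}$ agree on all roots of the form $\ttau$-products applied to simple roots that actually arise (the only root where they could disagree is $\alpha_{\sigma_1}$, handled separately at $k=1$), so writing the condition with base $A$ versus $\sigma_1^{-1}(A)$ is immaterial — matching the stated $\sD^{\tau_1^{-1}(A')}_{A}(\ttau\le\tsigma)$. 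The disjointness of the two sets in Case C is clear since they have different first coordinates.

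\textbf{Main obstacle.} The routine bookkeeping is all in keeping the signs straight, but the genuinely delicate point is the Case~C tail with $\rho_1=e$: one must argue that replacing the base orientation $\sigma_1^{-1}(A)$ by $A$ in the definition of $\sD^{\cdot}_{\cdot}(\ttau\le\tsigma)$ changes nothing, i.e.\ that the orientations $A$ and $\sigma_1^{-1}(A)=s_{\sigma_1}(A)$ assign the same sign to every root that enters the distinguishedness conditions for $\ttau$. This is true because those roots are of the form $-\tau_2\cdots\tau_k(\alpha_{\sigma_k})$ and the analysis only ``sees'' slot~$1$ through $\alpha_{\sigma_1}$ itself, which is excised from the tail conditions; but making this precise — and confirming it is consistent with how the analogous statement is needed downstream in the proof of Theorem~\ref{thm:change-of-basis-formula} — is where care is required. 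Everything else is a direct expansion of definitions together with Proposition~\ref{prop:simple-L-recursion} and relations \eqref{eq:1}--\eqref{eq:1''}.
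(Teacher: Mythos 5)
Your verification of \eqref{E:L-recursion}, your treatment of the branch $\rho_1=\tau_1$, and your slot-$1$ analysis (that $1\in K$ forces exactly the Case C hypothesis, so the extra branch is correctly absent in Case A) are all correct, and this direct unwinding is indeed all that is intended here (the paper gives no proof, calling the statement straightforward). The gap is precisely at the point you flag. For $k\geq 2$ your own identities give $\sgn_A(-\tau_1\cdots\tau_k(\alpha_{\sigma_k}))=\sgn_{\tau_1^{-1}(A)}(-\tau_2\cdots\tau_k(\alpha_{\sigma_k}))$ and, when $\rho_1=e$, $\sgn_{A'}(-\rho_1\cdots\rho_k(\alpha_{\sigma_k}))=\sgn_{A'}(-\rho_2\cdots\rho_k(\alpha_{\sigma_k}))$; hence in the Case C branch with $\rho_1=e$ the tail condition is governed by the pair $(\tau_1^{-1}(A),\,A')$, i.e.\ it says $\tilde{\rho}\in\sD^{A'}_{\tau_1^{-1}(A)}(\tilde{\tau}\leq\tilde{\sigma})$ --- not by $(A,\tau_1^{-1}(A'))$ as you assert. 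Your attempted bridge, that $A$ and $\sigma_1^{-1}(A)$ assign the same sign to every root that actually occurs, is false for general orientations: in type $A_2$, for $A=s_2(A_0)$ and $s=s_1$ the orientations $A$ and $s(A)=s_1s_2(A_0)$ disagree on \emph{every} root, and nothing constrains the roots $-\tau_2\cdots\tau_k(\alpha_{\sigma_k})$ to avoid the disagreement locus; moreover even granting it you would still need the analogous (equally false) claim for $A'$ versus $\sigma_1^{-1}(A')$ to reach the printed superscript.

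No bridge can exist, because the two index sets genuinely differ; what your computation has actually uncovered is that the printed Case C second branch has the $\tau_1$-twist on the wrong orientation (presumably a typo). Concretely, in type $A_2$ take $\sigma=\tau=(s_1,s_2)$, $A=A_0$, $A'=s_1s_2(A_0)$, so $\Delta^{A'}_+=\{\alpha_2,\,-\alpha_1-\alpha_2,\,-\alpha_1\}$ and slot $1$ is in Case C. The subexpression $\rho=(e,e)$ fails \eqref{eq:7} at $k=2$, since that condition reads $\sgn_{A_0}(\alpha_1+\alpha_2)=\sgn_{A'}(-\alpha_2)$, i.e.\ $+=-$; so $\rho\notin\sD^{A'}_{A}(\sigma\leq\sigma)$, and correspondingly straightening $L(\sigma,A_0)=c_1^+c_2^+$ yields only the three terms $L((s_1,s_2),A')+L((s_1,e),A')+L((e,s_2),A')$. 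Yet $(e)\in\sD^{\tau_1^{-1}(A')}_{A}((s_2)\leq(s_2))=\sD^{s_2(A_0)}_{A_0}((s_2)\leq(s_2))$, because the relevant condition $\sgn_{A_0}(\alpha_2)=\sgn_{s_2(A_0)}(-\alpha_2)$ holds; so the branch as printed would wrongly include $(e,e)$, whereas $(e)\notin\sD^{A'}_{s_1(A_0)}((s_2)\leq(s_2))$, as required. The statement should therefore be proved with the Case C second branch equal to $\{(e,\tilde{\rho})\suchthat\tilde{\rho}\in\sD^{A'}_{\tau_1^{-1}(A)}(\tilde{\tau}\leq\tilde{\sigma})\}$; this is also exactly what the omitted Case C of the proof of Theorem~\ref{thm:change-of-basis-formula} requires, since there the tail $L(\tilde{\tau}\leq\tilde{\sigma},\tau_1^{-1}(A))$ multiplying $L(e\leq\sigma_1,A')$ must be expanded by induction in the $A'$-oriented basis before reassembling via Proposition~\ref{prop:simple-L-recursion}. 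With that correction, your derivation --- minus the false sign-agreement claim --- does complete the proof.
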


\begin{proof}[Proof of Theorem \ref{thm:change-of-basis-formula}]

Let us write $\sigma = (\sigma_1, \dotsc, \sigma_r)$ and $\tau = (\tau_1, \dotsc, \tau_r)$. We will proceed by induction on $r$.  

There are three cases according to the three cases in Proposition \ref{prop:L-recursion}. We will argue for the first case, i.e., let us assume $\sgn_{A}(\tau_1(\alpha_{\sigma_1})) = \sgn_{A'}(\tau_1(\alpha_{\sigma_1}))$. The other cases are similar and we omit them to avoid repetition.  

By \eqref{E:L-recursion} and Proposition~\ref{prop:simple-L-recursion} we have
\begin{align}
  \label{eq:10}
L(\tau\leq\sigma, A) = L(\tau_1 \leq  \sigma_1, A') L(\ttau\leq \tsigma, \tau_1^{-1}(A) ) 
  \mathperiod
\end{align}
By induction we have
\begin{align}
  \label{eq:11}
  &L(\ttau\leq \tsigma, \tau_1^{-1}(A) )\\ \notag&= \sum_{ \trho = (\rho_2, \dotsc, \rho_r) \in \sD^{\tau_1^{-1}(A')}_{\tau_1^{-1}(A)}(\ttau \leq \tsigma) } (-1)^{\#\cK_{\tau_1^{-1}(A)}^{\tau_1^{-1}(A')}(\trho\leq\ttau\leq\tsigma)} L(\trho \leq \ttau, \tau_1^{-1}(A') )
  \mathperiod
\end{align}
Thus \eqref{eq:10} equals
\begin{align}
\sum_{ \trho = (\rho_2, \dotsc, \rho_r) \in   \sD^{\tau_1^{-1}(A')}_{\tau_1^{-1}(A)}(\ttau \leq \tsigma) } (-1)^{\#\cK_{\tau_1^{-1}(A)}^{\tau_1^{-1}(A')}(\trho\leq\ttau\leq\tsigma)} L((\tau_1,\rho_2, \dotsc, \rho_r) \leq \tau, A' )
\end{align}
by Proposition \ref{prop:simple-L-recursion}. Finally, by \eqref{E:D-recursion}, this is equal to
\begin{align}
\sum_{ \rho = (\rho_1, \dotsc, \rho_r) \in   \sD^{A'}_{A}(\tau \leq \sigma)} (-1)^{\#\cK_{\tau_1^{-1}(A)}^{\tau_1^{-1}(A')}(\trho\leq\ttau\leq\tsigma)} L(\rho \leq \tau, A' )
  \mathperiod
\end{align}
Finally, by our assumption that $\sgn_{A}(\tau_1(\alpha_{\sigma_1})) = \sgn_{A'}(\tau_1(\alpha_{\sigma_1}))$, we have 
\begin{align}
{\#\cK_A^{A'}(\rho\leq\tau\leq\sigma)} =  {\#\cK_{\tau_1^{-1}(A)}^{\tau_1^{-1}(A')}(\trho\leq\ttau\leq\tsigma)} 
 \label{eq:12}
\end{align}
and the change of basis formula follows.
\end{proof}

\section{Counting points in Kac-Moody flag varieties}

Let $G$ be the Kac-Moody group associated with the root system of \S\ref{S:notation}, let $B\subset G$ be the positive Borel subgroup, and let $T\subset B$ be the maximal torus. (The reader may refer to \cite{Kum} for a thorough treatment of these notions.) We will be concerned with the Kac-Moody flag variety $G/B$, which is an ind-scheme. The fixed points for the $T$-action on $G/B$ are precisely indexed by $w \in W$. Let us abuse notation and write $w \in G/B$ for the corresponding fixed point and also $w\in G$ for fixed lifts of these points.

We call a cocharacter $\eta: \GG_m \rightarrow T$ regular if its pairing with any real root is nonzero. For any regular cocharacter $\eta$, we define an orientation $A_\eta$ by $\alpha >_{A_\eta} 0$ if any only if $\langle\eta, \alpha\rangle >0$. We will consider the change of basis between oriented bases corresponding to $A_0$ and $A_\eta$.
 
If $\sigma = (\sigma_1, \dotsc, \sigma_r)$ is a reduced expression, we have by Theorem~\ref{thm:change-of-basis-formula} that
\begin{align}
  \label{eq:39}
L(\sigma, A_0) = \sum_{\tau \in \sD^{\eta}(\sigma)} L(\tau \leq \sigma, A_\eta)
\end{align}
where, to simplify notation, we write $\sD^{\eta}(\sigma) = \sD^{A_\eta}_{A_0}(\sigma \leq \sigma)$.
Mapping to the Hecke algebra gives
\begin{align}
  \label{eq:001}
T(\sigma, A_0) = \sum_{\tau \in \sD^{\eta}(\sigma)} m^{\eta}_{\tau \leq \sigma}\,T(\pi(\tau), A_\eta)
\end{align}
where
\begin{align}
  m^{\eta}_{\tau \leq \sigma} = (v-v^{-1})^{\kappa_\tau}
  \mathperiod
\end{align}
(Note that we do not get any minus signs here because $\sigma$ is reduced and $A=A_0$.)
Let us write
\begin{align}
M^{\eta,u}_{\sigma} &= \sum_{\tau \in \sD^{\eta}(\sigma): \pi(\tau) = u} v^{ \kappa_\tau + 2 \zeta_\tau^+} m^{\eta}_{\tau \leq \sigma}
                      = \sum_{\tau \in \sD^{\eta}(\sigma): \pi(\tau) = u} v^{2\zeta_\tau^+}(v^2-1)^{\kappa_\tau}
                      \mathperiod
\end{align}

Now let us define $X^{\eta,u}_w \subset G/B$ by
\begin{align}
  \label{eq:40}
  X^{\eta,u}_w = \{ x \in B w \cdot B \suchthat  \lim_{z \rightarrow 0} \eta(z)\cdot x = u \}
  \mathperiod 
\end{align}
In the next section we will prove the following:
\begin{Theorem}
  \label{thm:point-count-in-flag-variety}
  Let $w\in W$, and let $\sigma$ be a reduced expression for $w$. Let $\eta$ be a regular cocharacter. Then for every $u\in W$, we have:
  \begin{align}
    \label{eq:41}
    \left.M^{\eta,u}_{\sigma}\right|_{v^2=q} = \# X^{\eta,u}_w (\FF_q)
    \mathperiod
  \end{align}
\end{Theorem}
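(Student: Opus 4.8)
The plan is to prove Theorem~\ref{thm:point-count-in-flag-variety} via the Bott-Samelson resolution, following the strategy of \cite{Gaussent-Littelmann}. Let $\sigma=(s_{i_1},\dotsc,s_{i_r})$ be the given reduced expression for $w$, and let $Z_\sigma=P_{i_1}\times^B\cdots\times^B P_{i_r}/B$ be the associated Bott-Samelson variety, with its birational map $\mu\colon Z_\sigma\to\overline{BwB/B}$. The torus $T$ acts on $Z_\sigma$, and by the standard description of Bott-Samelson cells, $Z_\sigma$ has a paving by affine spaces indexed by subexpressions $\tau\le\sigma$: the cell $C_\tau$ consists of classes $[p_1,\dotsc,p_r]$ with $p_k\in B s_{i_k}B$ when $\tau_k=s_{i_k}$ and $p_k\in B$ when $\tau_k=e$; its dimension is $\kappa_\tau':=\#\{k:\tau_k=s_{i_k}\}=r-\kappa_\tau$. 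First I would recall (or prove directly by induction on $r$, peeling off the last factor) that $\mu$ restricted to $C_\tau$ is an isomorphism onto its image when $\tau$ is \emph{reduced} as a subword, i.e. when $\pi(\tau)$ has length equal to the number of non-identity slots, and that the image lands in $B\pi(\tau)B/B$; in general $\mu(C_\tau)\subseteq \overline{B\pi(\tau)B/B}$.

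The next step is to analyze the $\eta$-limit on each cell. For $x\in C_\tau$, I would compute $\lim_{z\to 0}\eta(z)\cdot x$ by working coordinate-by-coordinate in the Bott-Samelson chart: each non-identity slot $\tau_k=s_{i_k}$ contributes a one-parameter unipotent coordinate $u_{\beta_k}(t_k)$ along the root $\beta_k=-\tau_1\cdots\tau_k(\alpha_{\sigma_k})$ (this is exactly the root appearing in the sign $\epsilon_k$ in the definition of $L(\tau\le\sigma,A)$ and in \eqref{eq:7}), while identity slots contribute nothing. Conjugating by $\eta(z)$ scales $t_k$ by $z^{\langle\eta,\beta_k\rangle}$, so as $z\to 0$ the coordinate $t_k$ is killed precisely when $\langle\eta,\beta_k\rangle>0$, i.e. when $\beta_k>_{A_\eta}0$, and blows up otherwise. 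Hence the limit exists on the locus of $C_\tau$ where $t_k=0$ for every $k$ with $\beta_k<_{A_\eta}0$; on that locus the limit point is $\pi(\rho)$, where $\rho\le\tau$ is the subexpression obtained by setting $\rho_k=e$ exactly at those killed slots together with the already-identity slots. The condition that $\mu$ of this limit locus actually maps to the fixed point $u$, combined with the requirement $x\in BwB/B$ (so that $\mu$ is an isomorphism near $x$, forcing the surviving subword to be reduced and equal to $\pi(\rho)$ after simplification) — this is exactly the combinatorial condition defining $\sD^\eta(\sigma)$ and the fiber $\pi(\tau)=u$: a point of $X^{\eta,u}_w$ arises from a pair $\rho\le\tau\le\sigma$ with $\rho\in\sD^{A_\eta}_{A_0}(\tau\le\sigma)$ and $\pi(\rho)=u$. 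Wait — I should be careful here about which subexpression is ``$\tau$'': in the notation of \eqref{eq:39}, $\tau\in\sD^\eta(\sigma)$ is itself the subexpression of $\sigma$, and $\pi(\tau)=u$; the intermediate data is that within the big cell we choose free parameters along the positively-oriented roots and zero along the negatively-oriented ones. So the correct bookkeeping is: stratify $X^{\eta,u}_w$ by which slots of the Bott-Samelson coordinates of a reduced-word representative survive, getting one stratum isomorphic to an affine space $\mathbb{A}^{\zeta_\tau^+}\times(\mathbb{G}_m\text{ or }\mathbb{A}^1\text{ contributions from foldings})$ for each $\tau\in\sD^\eta(\sigma)$ with $\pi(\tau)=u$.

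Then I would count $\FF_q$-points stratum by stratum. In the stratum attached to $\tau\in\sD^\eta(\sigma)$ with $\pi(\tau)=u$: the $\zeta_\tau^+$ slots where $\tau_k=\sigma_k$ and $\beta_k>_{A_\eta}0$ give free $\FF_q$-parameters, contributing $q^{\zeta_\tau^+}$; each of the $\kappa_\tau$ folded slots $\tau_k=e$ sits inside $B s_{i_k}B/B\cong\mathbb{A}^1$ but the point must stay in $BwB/B$ and have the prescribed limit, which as in \cite{Gaussent-Littelmann} forces it into $\mathbb{G}_m$, contributing $(q-1)^{\kappa_\tau}$; and one checks there are no further parameters because $\sigma$ is reduced. (The slots with $\tau_k=\sigma_k$ but $\beta_k<_{A_\eta}0$ are forced to $t_k=0$, contributing a single point.) Summing, $\#X^{\eta,u}_w(\FF_q)=\sum_{\tau\in\sD^\eta(\sigma):\,\pi(\tau)=u} q^{\zeta_\tau^+}(q-1)^{\kappa_\tau}$, which equals $M^{\eta,u}_\sigma|_{v^2=q}$ using the second displayed form of $M^{\eta,u}_\sigma$. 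The main obstacle I anticipate is making the identification of strata rigorous: verifying that the Bott-Samelson coordinates transform under $\eta$-conjugation exactly by the roots $-\tau_1\cdots\tau_k(\alpha_{\sigma_k})$, and that intersecting the limit locus with the big cell $BwB/B$ indeed cuts the folded coordinates down to $\mathbb{G}_m$ and identifies the limit point with $\pi(\tau)$ precisely when the distinguished-subexpression condition \eqref{eq:7} holds. This is where the inductive peeling-off argument (matching Proposition~\ref{prop:L-recursion}) and careful tracking of the orientation $\tau_1^{-1}(A)$ under the $P_{i_1}\times^B(-)$ step will do the work; everything else is a point count of affine cells.
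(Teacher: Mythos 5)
Your proposal is correct and follows essentially the same route as the paper: pull back to the Bott--Samelson variety, use the affine charts $\cU_\tau$ at the $T$-fixed points in which the $k$-th coordinate has $\eta$-weight $\langle\eta,-\tau_1\cdots\tau_k(\alpha_{\sigma_k})\rangle$, identify the Bia\l{}ynicki-Birula cells, observe that intersecting with $\pi^{-1}(Bw\cdot B)$ forces the folded coordinates to be nonzero (whence the distinguished condition) while reducedness makes $\pi$ bijective over $Bw\cdot B$, and count $q^{\zeta_\tau^+}(q-1)^{\kappa_\tau}$ per stratum. The verifications you defer (the weight computation, and the fact that a folded slot with vanishing coordinate leaves $Bw\cdot B$) are exactly the content of the paper's Theorem~\ref{thm:computing-intersection} together with Lemmas~\ref{lem:B-w-s-B}, \ref{lem:B-s-B}, \ref{lem:inclusion-of-sigma-k}, and \ref{lem:image-of-sigma-sigma-k}.
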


\begin{Remark}\label{R:BD}
For any orientation $A$, let $Q_A\subset G$ be the subgroup generated by the one-parameter subgroups $e_\alpha : \GG_a\to G$ corresponding to $\alpha\in\Delta_+^A$. Then \cite[Theorem 1]{BD} asserts that the Bruhat-type decomposition $G=\sqcup_{w\in W} Q_A w B$ holds. In particular, it follows that for $A=A_\eta$ the limit $\lim_{z \rightarrow 0} \eta(z)\cdot x$ exists and is equal to an element of $W$ for any $x\in G/B$. Moreover, we see that
$X_w^{\eta,u}=Q_A u\cdot B\cap Bw\cdot B$.
We note that Theorem~\ref{thm:point-count-in-flag-variety} is a consequence of \cite[Theorem 5]{BD}, but we will give a different proof below.
\end{Remark}

\begin{Remark}
By Lemma \ref{L:kappa+2tau}, we have
\begin{align}
  \label{eq:32}
  M^{\eta,u}_{\sigma} = v^{\ell(w)+\ell_{A_\eta}(u)} \sum_{\tau \in \sD^{\eta}(\sigma): \pi(\tau) = u} m^{\eta}_{\tau \leq \sigma}
  \mathperiod 
\end{align}
In particular, up to an explicit constant depending only on $w$, $u$, and $\eta$, the point-count in Theorem~\ref{thm:point-count-in-flag-variety} is given exactly by summing coefficients $m^{\eta}_{\tau \leq \sigma}$.
\end{Remark}

\subsection{Bott-Samelson varieties}

For each $i \in I$, let us write $P_{i}\supset B$ for the corresponding rank-one parabolic subgroup; we will also write $P_{s_i} = P_{i}$. Let us also write $e_{ \pm \alpha_i} = e_{\pm \alpha_{s_i}} : \GG_a \rightarrow G$ for the corresponding one-parameter subgroups.

Fix an expression $\sigma=(\sigma_1,\dotsc,\sigma_r)$. Then we can form the Bott-Samelson variety:
\begin{align}
  \label{eq:14}
  \Sigma( \sigma ) = P_{\sigma_1} \times^{B} P_{\sigma_2} \times^{B} \cdots\times^B P_{\sigma_r} / B
\end{align}
which is a smooth projective variety.
Let us denote the points of $\Sigma(\sigma)$ by tuples $[g_1, \dotsc, g_r]$ where $g_j \in P_{\sigma_j}$ and we consider such tuples up to the following equivalence
\begin{align}
  \label{eq:16}
  [g_1, \dotsc, g_r] = [g_1 b_1, b_1^{-1} g_2 b_2, \dotsc, b_{r-1}^{-1} g_r b_{r}] 
\end{align}
for all $(b_1, \dotsc, b_r) \in B^{r}$.
There is a natural $B$-equivariant map
\begin{align}
  \label{eq:15}
  \pi : \Sigma( \sigma) \rightarrow G/B
\end{align}
given by 
\begin{align}
  \label{eq:17}
\pi( [g_1, \dotsc, g_r] ) = g_1 \cdots g_r B 
\end{align}
and the $B$-action on $\Sigma(\sigma)$ is given by left multiplication on the first component.
For any subexpression $\tau = (\tau_1, \dotsc, \tau_r)$, we can form the corresponding point $[\tau_1, \dotsc, \tau_r] \in \Sigma(\sigma)$. These points are precisely the $T$-fixed points of $\Sigma(\sigma)$.

For each subexpression $\tau \leq \sigma$ let us define
\begin{align}
  \label{eq:18}
  \cU_{\tau} = \{ [ \tau_1 e_{ - \alpha_{\sigma_1}}(z_1), \dotsc, \tau_r e_{ - \alpha_{\sigma_r}}(z_r)]  \suchthat z_i \in \GG_a \}
  \mathperiod
\end{align}
Then  $\cU_{\tau}$ is an open subset of $\Sigma(\sigma)$ isomorphic to the affine space $\AA^r$. The coordinates $(z_1,\dotsc, z_r)$ allow us to view $\cU_{\tau}$ as the tangent space to the point $\tau \in \Sigma(\sigma)$.


Let $\eta : \GG_m \rightarrow T$ be a regular cocharacter. Then we can restrict the $T$-action on $\Sigma(\sigma)$ to a $\GG_m$-action via $\eta$. Because $\eta$ is regular, the fixed points of $\eta$ are exactly equal to the $T$-fixed points. Parallel to \eqref{eq:40}, we can therefore form the Bia\l{}ynicki-Birula cell
\begin{align}
  \label{eq:19}
  C^{\eta}_\tau = \{ x \in \Sigma(\sigma) \suchthat \lim_{z\rightarrow0} \eta(z)\cdot x = \tau \}
  \mathperiod
\end{align}
Because $\cU_{\tau}$ is a $T$-invariant open subset containing $\tau$, we must have
$C^{\eta}_\tau \subset \cU_{\tau}.$
Furthermore, the coordinates \eqref{eq:18} on $\cU_{\tau}$ allow us to explicitly compute the equations defining $C^{\eta}_\tau$. It is given by the vanishing of the coordinates $z_k$ that have negative weight under the action of $\eta(z)$. Under the action of $\eta(z)$, the coordinate $z_k$ is sent to $z^{ \langle \tau_k \cdots \tau_1(\eta), -\alpha_{\sigma_k} \rangle} z_k$. Therefore, we must set $z_k = 0$ if $\langle \eta, -\tau_1 \cdots \tau_k(\alpha_{\sigma_k}) \rangle < 0$. To summarize:
  \begin{align}
    \label{eq:22}
    C^{\eta}_\tau   = \{ [ \tau_1 e_{ - \alpha_{\sigma_1}}(z_1), \dotsc, \tau_r e_{ - \alpha_{\sigma_r}}(z_r)] \in \cU_\tau  \suchthat z_k = 0 \textif \langle \eta, -\tau_1 \cdots \tau_k(\alpha_{\sigma_k}) \rangle < 0 \}
    \mathperiod
  \end{align}


\begin{Theorem}
  \label{thm:computing-intersection}
 Let $\sigma$ be a {\it reduced} expression. If $\tau \notin \sD^\eta(\sigma)$, then
 \begin{align}
   \label{eq:24}
   \pi^{-1}( B \pi(\sigma)\cdot B) \cap C^{\eta}_\tau = \emptyset
   \mathperiod
 \end{align}
If $\tau \in \sD^\eta(\sigma)$, then:
 \begin{align*}
   \pi^{-1}( B \pi(\sigma) \cdot B) \cap C^{\eta}_\tau
   & =\{ [ \tau_1 e_{ - \alpha_{\sigma_1}}(z_1), \cdots, \tau_r e_{ - \alpha_{\sigma_r}}(z_r)] \in \cU_\tau  \suchthat\\
   &\qquad\ z_k = 0 \textif \langle \eta, -\tau_1 \cdots \tau_k(\alpha_{\sigma_k}) \rangle < 0 \textand z_k \neq 0 \textif \tau_k = e\}
     \mathperiod 
 \end{align*} 
\end{Theorem}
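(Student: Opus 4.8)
The plan is to prove both assertions simultaneously by induction on the length $r$ of $\sigma$, using the recursive structure of Bott-Samelson varieties together with Proposition~\ref{prop:L-recursion} and the analysis of the Bia\l ynicki-Birula cells already carried out in \eqref{eq:22}. First I would set up the recursion geometrically: there is a fibration $\Sigma(\sigma)\to P_{\sigma_1}/B$ whose fibre over $[g_1]$ is a copy of $\Sigma(\tsigma)$ where $\tsigma=(\sigma_2,\dots,\sigma_r)$, and the map $\pi$ restricted to a fibre is, after translating by $g_1$, the analogous Bott-Samelson map for $\tsigma$. Concretely, a point of $\cU_\tau$ is $[\tau_1 e_{-\alpha_{\sigma_1}}(z_1),\dots]$, and writing $\tau_1 e_{-\alpha_{\sigma_1}}(z_1)=(\text{something in }B)\cdot(\text{translate})$ lets me relate $\pi^{-1}(B\pi(\sigma)\cdot B)\cap\cU_\tau$ to the corresponding locus for $\tsigma$ with the orientation conjugated by $\tau_1^{-1}$, exactly matching the recursion \eqref{E:D-recursion}. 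Since $\sigma$ is reduced, $\pi(\sigma)=\sigma_1\pi(\tsigma)$ with $\ell(\pi(\sigma))=1+\ell(\pi(\tsigma))$, and $B\pi(\sigma)\cdot B=B\sigma_1 B\pi(\tsigma)\cdot B$, which is what makes the decomposition of the Bruhat cell compatible with the fibration.

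Next I would match the cases. The first coordinate $z_1$ is unconstrained by the BB-cell condition exactly when $\langle\eta,-\tau_1(\alpha_{\sigma_1})\rangle>0$, i.e. $\sgn_{A_\eta}(-\tau_1(\alpha_{\sigma_1}))=+1$. Comparing with $\sgn_{A_0}(-\tau_1(\alpha_{\sigma_1}))$ — which is $+1$ precisely when $\tau_1(\alpha_{\sigma_1})<0$, and this already determines whether the first Bruhat factor $B\sigma_1 B$ is hit transversally or not — I recover Cases A, B, C of Proposition~\ref{prop:L-recursion}. In Case B ($\tau_1=e$, signs disagree) the point cannot map into the open Bruhat cell, and correspondingly there is no $\tau_1=e$ option compatible with the distinguishedness condition \eqref{eq:7} at $k=1$ with $\tau=\sigma$; but wait — here $\tau=\sigma$, so there is no constraint from \eqref{eq:7} itself, rather the issue is that $\tau\notin\sD^\eta(\sigma)$ forces emptiness, which is precisely what I need to track. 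In Case C ($\tau_1=\sigma_1$, signs disagree) the first factor forces $z_1$ to take a nonzero value to land in $B\pi(\sigma)\cdot B$ rather than the smaller cell, which is the geometric incarnation of the condition ``$z_k\neq 0$ if $\tau_k=e$'' — except it is index $1$ with $\tau_1=\sigma_1$, so actually the relevant bookkeeping is that this does not add a ``$z_k\neq0$'' constraint at $k=1$ but does change the orientation passed down. I would carefully verify that the only indices where a ``$z_k\neq 0$'' condition is imposed are those $k$ with $\tau_k=e$, and that for such $k$ the condition $\langle\eta,-\tau_1\cdots\tau_k(\alpha_{\sigma_k})\rangle<0$ is incompatible with membership in $B\pi(\sigma)\cdot B$ unless the corresponding sign agreement fails — which is exactly the statement $\tau\notin\sD^\eta(\sigma)\Rightarrow$ emptiness.

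For the base case $r=1$: $\Sigma(\sigma)=P_{\sigma_1}/B\cong\PP^1$, $\pi$ is an isomorphism, $B\pi(\sigma)\cdot B=B\sigma_1 B$ is the open cell (complement of the point $e$), and $\cU_\tau$ for $\tau=(e)$ is the affine line $\{e_{-\alpha_{\sigma_1}}(z_1)B\}$; one checks directly that $\pi^{-1}(B\sigma_1 B)\cap\cU_{(e)}=\{z_1\neq 0\}$ while $C^\eta_{(e)}$ is the whole line or the origin depending on the sign of $\langle\eta,-\alpha_{\sigma_1}\rangle$, and $\tau=(e)\in\sD^\eta(\sigma)$ iff this sign is positive — matching both formulas. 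I expect the main obstacle to be the bookkeeping in the inductive step: precisely identifying how $\pi^{-1}(B\pi(\sigma)\cdot B)$ interacts with the first-factor fibration when $\tau_1=\sigma_1$ versus $\tau_1=e$ (this is essentially the computation underlying the recursion for $T(\sigma,A)$ in the Hecke algebra, but now at the level of varieties rather than classes), and making sure the conjugation of the cocharacter $\eta$ by $\tau_1^{-1}$ — equivalently replacing $A_\eta$ by $\tau_1^{-1}(A_\eta)$ — is handled consistently so that the inductive hypothesis applies verbatim to $(\tsigma,\tau_1^{-1}(A_\eta))$. The combinatorics of which coordinates get set to zero versus forced nonzero should then fall out of \eqref{eq:22} and the recursion \eqref{E:D-recursion}, but assembling it cleanly will require care.
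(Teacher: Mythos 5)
Your overall strategy---induction on $r$ by peeling off the first Bott--Samelson factor, conjugating the cocharacter to $\tau_1^{-1}(\eta)$ so the inductive hypothesis applies to $(\tilde{\sigma},A_{\tau_1^{-1}(\eta)})$, with the $r=1$ base case checked correctly---is viable, and it is organized differently from the paper's proof (the paper establishes the two containments separately: one via the deleted-letter subvarieties $\Sigma(\sigma^{(k)})$ and a fixed-point argument, the other by a left-to-right product computation using Lemmas~\ref{lem:B-w-s-B} and \ref{lem:B-s-B}; it never conjugates the orientation). However, as written your plan leaves unproved exactly the step that carries all the geometric content: the claim that for $[g_1,\dotsc,g_r]\in\mathcal{U}_\tau$ one has $g_1\cdots g_r B\in B\pi(\sigma)\cdot B$ if and only if $g_1\in B\sigma_1 B$ and $g_2\cdots g_r B\in B\sigma_2\cdots\sigma_r\cdot B$. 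The ``if'' direction is Lemma~\ref{lem:B-w-s-B} (in its left-handed form, using reducedness). The ``only if'' direction is not automatic from ``compatibility of the Bruhat cell with the fibration'': you must exclude that a point with $g_1\in B$ (i.e.\ $\tau_1=e$ and $z_1=0$), or with $g_2\cdots g_r B$ lying in a smaller cell, still lands in $B\pi(\sigma)\cdot B$. For that you need both that $\pi(\Sigma((\sigma_2,\dotsc,\sigma_r)))$ is contained in the union of cells $BvB$ with $\ell(v)\le r-1$, and that $B\sigma_1 B\cdot BvB\subseteq B\sigma_1 vB\cup BvB$, followed by a length comparison against $\ell(\pi(\sigma))=r$. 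These facts are standard, but they are precisely the content the paper isolates in Lemmas~\ref{lem:inclusion-of-sigma-k}--\ref{lem:image-of-sigma-sigma-k}; your writeup acknowledges the issue (``the main obstacle \dots bookkeeping'') without supplying an argument, so the induction does not yet close.

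A secondary point: your matching with Cases A/B/C of Proposition~\ref{prop:L-recursion} conflates two different roles of $\tau$. In the application relevant here the middle subexpression of the change-of-basis machinery is $\sigma$ itself (the theorem's $\tau$ plays the role of $\rho$), so the proposition's first entry is $\sigma_1\neq e$ and its Case B never arises; you notice this mid-argument but the discussion remains muddled (e.g.\ the claim that in Case C ``the first factor forces $z_1$ to take a nonzero value,'' later retracted). The combinatorial input you actually need is much simpler: since $\sigma$ is reduced, $\tau\in\mathscr{D}^\eta(\sigma)$ means $\langle\eta,-\tau_1\cdots\tau_k(\alpha_{\sigma_k})\rangle>0$ for every $k$ with $\tau_k=e$, and for $k\ge 2$ these conditions coincide with the distinguishedness conditions for $\tilde{\tau}\le\tilde{\sigma}$ with respect to $\tau_1^{-1}(\eta)$, while the $k=1$ condition is handled directly by the analysis of $g_1$. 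Stating this explicitly, rather than invoking \eqref{E:D-recursion}, would both simplify and repair that part of the argument; the substantive gap remains the fiber-wise Bruhat-cell equivalence described above.
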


In the proof of Theorem~\ref{thm:computing-intersection} we will use the following three standard facts.

\begin{Lemma}
  \label{lem:B-w-s-B}
  Suppose $w \in W$ and $s \in S$ with $\ell(ws) = \ell(w) + 1$. Then, we have 
\begin{align} \label{eq:25}
  B w B s B = B ws B
  \mathperiod
  \end{align}
\end{Lemma}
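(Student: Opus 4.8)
The plan is to prove the identity $BwBsB = BwsB$ using the standard structure theory of the Bruhat decomposition for Kac-Moody groups, specifically the multiplicative property of the cells $C(w) = BwB$ under right multiplication by a rank-one parabolic $P_s = B \cup BsB$. The key input is the well-known relation $C(w) \cdot C(s) = C(ws)$ whenever $\ell(ws) = \ell(w) + 1$, and $C(w) \cdot C(s) = C(ws) \cup C(w)$ whenever $\ell(ws) = \ell(w) - 1$; we are in the first situation by hypothesis, so $BwB \cdot BsB = BwsB$. To turn this into the stated claim, first I would observe that $BwBsB \supseteq BwsB$ trivially (take representatives $w$ and $s$, noting $w \cdot s$ lies in the product and $B$ is a group), so the content is the reverse inclusion $BwBsB \subseteq BwsB$.

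The main step is therefore to show $BwBsB \subseteq BwsB$. First I would write $BwB = \bigcup_{u \in U_w} Bwu$ where $U_w \subseteq U$ is the appropriate finite-dimensional unipotent subgroup, or more simply use that $BwB \cdot sB$ is a union of $B$-orbits (under left multiplication) on $G/B$, each of which is some Schubert cell $BvB/B$. Since $\pi(\sigma) = ws$ is obtained by a single simple multiplication from $w$ and $\ell(ws) = \ell(w)+1$, the only $B$-orbits that can appear in $BwB \cdot P_s/B$ are those indexed by $w$ and $ws$; and since $BwB \cdot sB$ does not meet $BwB \cdot B = BwB$ (as $wsB \neq wB$ and the relevant $SL_2$- or $PGL_2$-computation in the rank-one parabolic $P_{s}$ shows the cell $BwsB$ "absorbs" $BwsB \cdot sB$ but $BwB \cdot sB$ lands entirely in $BwsB$ when the length goes up), we conclude $BwBsB/B = BwsB/B$, hence $BwBsB = BwsB$ after multiplying back by $B$ on the right (which is absorbed since $BwsB$ is already right $B$-invariant).

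Concretely, the cleanest route: reduce everything to a rank-one computation. Choose a reduced expression $\sigma = (s_{i_1}, \dots, s_{i_{k-1}}, s)$ for $ws$ with $(s_{i_1}, \dots, s_{i_{k-1}})$ reduced for $w$. Then $BwsB = B s_{i_1} B \cdots B s_{i_{k-1}} B \, s B$ by the standard factorization of Schubert cells along reduced words (which itself follows by induction from the rank-one case $\ell(vs) = \ell(v)+1 \Rightarrow BvB \cdot sB \subseteq BvsB$, together with $BvsB \subseteq BvB \cdot sB$). Since $BwB = Bs_{i_1}B \cdots Bs_{i_{k-1}}B$, this says exactly $BwBsB = BwsB$. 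The only genuine computation needed is the rank-one fact: inside $P_s$, one has $P_s = B \sqcup BsB$, and $BsB \cdot sB = BsB \cup B$, so for $v$ with $\ell(vs) = \ell(v) + 1$ one gets $BvB \cdot sB = BvsB$ by writing $v = v's^{-1}\cdot s$... — I would instead simply cite it, since it is Lemma~\ref{lem:B-w-s-B}'s direct analogue in the Tits system axioms (axiom (BN2): $BsB \cdot BwB \subseteq BswB \cup BwB$, applied with the roles arranged so that length increases forces the $BwB$ term to drop out).

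The main obstacle I anticipate is purely bookkeeping: making sure that the "length increases, so the lower term drops" step is justified cleanly in the Kac-Moody (ind-scheme) setting rather than just for finite Coxeter groups. In the Kac-Moody case $G$ is not a group of finite type but the Bruhat decomposition $G = \bigsqcup_{w} BwB$ and the Tits-system axioms still hold (see \cite{Kum}), so the finite-type argument transfers verbatim; the subtlety is only that one should work with the abstract group $G$ (the $\CC$- or $\FF_q$-points), where everything is a genuine set-theoretic identity, and not worry about scheme structure. Once that is acknowledged, the lemma follows from the Tits-system axiom $BsB \cdot BwB \subseteq BswB \cup BwB$ (applied on the left, then transposed) combined with the fact that $BwsB$ and $BwB$ are disjoint, which forces $BwBsB$, a single $(B,B)$-double coset closure that is irreducible and meets $BwsB$, to equal $BwsB$.
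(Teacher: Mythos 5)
The paper does not prove this lemma at all: it is listed as one of ``three standard facts'' about the Bruhat decomposition of the Kac--Moody group, implicitly deferred to the Tits-system (BN-pair) theory as in \cite{Kum}. Your proposal ultimately rests on exactly that citation -- the product formula $BwB\cdot BsB=BwsB$ when $\ell(ws)=\ell(w)+1$ -- so in substance you are handling it the same way the paper does, and your remark that the Tits-system axioms hold verbatim for the Kac--Moody group (working set-theoretically with points, ignoring the ind-scheme structure) is the right thing to say.

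However, the part of your text that purports to \emph{derive} the lemma rather than cite it has a genuine flaw at the decisive step. From the axiom $BwB\, sB\subseteq BwsB\cup BwB$ together with $BwsB\subseteq BwBsB$ and the disjointness of the cells $BwsB$ and $BwB$, you cannot conclude $BwBsB=BwsB$: a priori the product could still be the full union $BwsB\cup BwB$, and the appeal to $BwBsB$ being ``a single $(B,B)$-double coset closure that is irreducible'' is neither true a priori (it is only known to be a union of double cosets) nor the kind of argument that applies to a set-theoretic identity. What is actually needed is the inclusion $wBs\subseteq BwsB$ when $\ell(ws)=\ell(w)+1$, equivalently $BwBsB\cap BwB=\emptyset$; this is proved by induction on $\ell(w)$ from the rank-one computation in $P_s$ (or read off directly from the standard statement $C(w)C(s)=C(ws)$ in Bourbaki or \cite{Kum}). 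Your earlier ``cleanest route'' via factorization along a reduced word is also circular as written, since the factorization $BwsB=Bs_{i_1}B\cdots Bs_{i_{k-1}}B\,sB$ is itself equivalent to the lemma. So either quote the standard product formula outright, as the paper does, or supply the induction establishing $wBs\subseteq BwsB$; the disjointness-plus-irreducibility shortcut does not close the argument.
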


\begin{Lemma}
  \label{lem:B-s-B}
Let $s \in S$. Then we have
\begin{align}
  \label{eq:26}
  B  e_{-\alpha_{s}}(z) B \subseteq B s B
  \mathperiod 
\end{align}
for $z \in \GG_m \subset \GG_a$.
\end{Lemma}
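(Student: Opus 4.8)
The plan is to prove Lemma~\ref{lem:B-s-B} by reducing to a rank-one computation inside the subgroup $P_s \supset B$, where everything is governed by $SL_2$ (or $PGL_2$). First I would recall that $P_s = B \sqcup BsB$ as a set, so it suffices to show that $e_{-\alpha_s}(z) \in BsB$ whenever $z \neq 0$; the two-sided $B$-invariance in~\eqref{eq:26} then follows because $BsB$ is a union of double cosets (in fact a single one). So the real content is the claim $e_{-\alpha_s}(z) B \subseteq B s B$ for $z \in \GG_m$.

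For that claim I would work in the rank-one group generated by $T$ and the root subgroups $e_{\pm\alpha_s}$, which surjects from $SL_2$ via $e_{\alpha_s}(a) \mapsto \begin{bmatrix} 1 & a \\ 0 & 1\end{bmatrix}$, $e_{-\alpha_s}(z)\mapsto \begin{bmatrix} 1 & 0 \\ z & 1\end{bmatrix}$, with $B$ corresponding to upper-triangular matrices and $s$ to the standard Weyl element $\begin{bmatrix} 0 & -1 \\ 1 & 0\end{bmatrix}$ (up to torus normalization). Then the key step is the elementary identity, valid for $z \neq 0$,
\begin{align}
\label{eq:lowertriangular-factorization}
\begin{bmatrix} 1 & 0 \\ z & 1\end{bmatrix} = \begin{bmatrix} 1 & z^{-1} \\ 0 & 1\end{bmatrix} \begin{bmatrix} z^{-1} & 0 \\ 0 & z\end{bmatrix}\begin{bmatrix} 0 & -1 \\ 1 & 0\end{bmatrix}\begin{bmatrix} 1 & z^{-1} \\ 0 & 1\end{bmatrix}
\mathperiod
\end{align}
Reading this off, $e_{-\alpha_s}(z) = e_{\alpha_s}(z^{-1})\, \alpha_s^\vee(z^{-1})\, \dot{s}\, e_{\alpha_s}(z^{-1})$ with $e_{\alpha_s}(z^{-1}) \in B$ and $\alpha_s^\vee(z^{-1}) \in T \subseteq B$, so $e_{-\alpha_s}(z) \in B s B$. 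Pushing this back up to $G$ via the inclusion $P_s \injects G$ gives~\eqref{eq:26}.

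I would present this cleanly by citing the standard structure theory of Kac-Moody groups (e.g.\ \cite{Kum}): the rank-one parabolic $P_s$ is generated by $B$ and $e_{-\alpha_s}(\GG_a)$, and the above factorization is exactly the Bruhat decomposition of $P_s$ relative to its Borel $B$. The main obstacle — really the only subtlety — is making sure the rank-one reduction is legitimate in the ind-scheme/Kac-Moody setting, i.e.\ that computations in the subgroup $\langle T, e_{\pm\alpha_s}\rangle \cong SL_2$ (or its image) transfer to $G$ and that the torus element $\alpha_s^\vee(z^{-1})$ lands in $T$; both are standard consequences of the amalgamated-product construction of Kac-Moody groups, so I would simply invoke the relevant statement from \cite{Kum} rather than reproving it. Everything else is the one-line matrix identity~\eqref{eq:lowertriangular-factorization}.
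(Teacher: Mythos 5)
Your proof is correct. The paper does not actually prove this lemma --- it is listed among ``three standard facts'' used in the proof of Theorem~\ref{thm:computing-intersection} and is invoked without argument --- and your rank-one reduction to $P_s$ together with the identity $e_{-\alpha_s}(z)=e_{\alpha_s}(z^{-1})\,\alpha_s^\vee(z^{-1})\,\dot{s}\,e_{\alpha_s}(z^{-1})$ for $z\neq 0$ (your matrix computation checks out) is precisely the standard justification one would cite from \cite{Kum}, so your write-up supplies exactly what the paper leaves implicit.
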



\begin{Lemma}
  \label{lem:inclusion-of-sigma-k}
  Let $\sigma^{(k)} = (\sigma_1, \dotsc, \widehat{\sigma_k}, \dotsc, \sigma_r)$. Then we have a closed embedding 
  \begin{align}
    \label{eq:21}
    i^{(k)}: \Sigma( \sigma^{(k)} ) \hookrightarrow \Sigma(\sigma) 
  \end{align}
  given by
  \begin{align}
    \label{eq:23}
    [g_1,\dotsc, g_{k-1}, g_{k+1}, \dotsc, g_r] \mapsto [g_1,\dotsc, g_{k-1}, e,  g_{k+1}, \dotsc, g_r]
    \mathperiod
  \end{align}
  Furthermore, the map $i^{(k)}$ intertwines the two maps $\pi : \Sigma( \sigma^{(k)}) \rightarrow G/B$ and $\pi : \Sigma( \sigma) \rightarrow G/B$.
\end{Lemma}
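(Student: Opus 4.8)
The plan is to realize $i^{(k)}$ as a closed immersion assembled from the iterated $\PP^1$-bundle structure of Bott--Samelson varieties, after which the intertwining property with $\pi$ is immediate. For $0\le j\le r$ write $Z_j=\Sigma(\sigma_1,\dotsc,\sigma_j)=P_{\sigma_1}\times^B\cdots\times^B P_{\sigma_j}/B$, so that $Z_0$ is a point and $Z_r=\Sigma(\sigma)$, and recall the standard fact that the forgetful map $Z_j\to Z_{j-1}$, $[g_1,\dotsc,g_j]\mapsto[g_1,\dotsc,g_{j-1}]$, exhibits $Z_j$ as a proper $P_{\sigma_j}/B\cong\PP^1$-bundle over $Z_{j-1}$. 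Similarly, for $j\ge k$ put $Z^{(k)}_j=\Sigma(\sigma_1,\dotsc,\sigma_{k-1},\sigma_{k+1},\dotsc,\sigma_j)$, so $Z^{(k)}_{k-1}=Z_{k-1}$ and $Z^{(k)}_r=\Sigma(\sigma^{(k)})$, with its analogous tower of $\PP^1$-bundles.

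First I would handle the insertion at slot $k$ in isolation. The map $s\colon Z_{k-1}\to Z_k$, $[g_1,\dotsc,g_{k-1}]\mapsto[g_1,\dotsc,g_{k-1},e]$, is well defined, because the $B$-equivalence on the target restricts to that on the source once the $k$-th parameter is set equal to the $(k-1)$-st (this is the identity $e=b_{k-1}^{-1}e\,b_k\iff b_k=b_{k-1}$, with the obvious modification if $k=1$), and it is a section of the bundle $Z_k\to Z_{k-1}$. A section of a separated morphism is a closed immersion, so $s$ identifies $Z_{k-1}$ with a closed subscheme $W_{k-1}\subset Z_k$. I would then propagate this up the remaining tower by setting $W_j=Z_j\times_{Z_{j-1}}W_{j-1}$ for $j=k+1,\dotsc,r$; since closed immersions are stable under base change, each $W_j$ is a closed subscheme of $Z_j$. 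One checks directly that inserting $e$ into slot $k$ gives a compatible family of isomorphisms $Z^{(k)}_j\xrightarrow{\sim}W_j$ lying over $Z^{(k)}_{j-1}\xrightarrow{\sim}W_{j-1}$, the point being that the fiber of either tower over a given point of $Z_{j-1}$ is the same copy of $P_{\sigma_j}/B$. At $j=r$ this produces a closed immersion $\Sigma(\sigma^{(k)})=Z^{(k)}_r\cong W_r\hookrightarrow Z_r=\Sigma(\sigma)$, and unwinding the construction shows it is exactly the map $[g_1,\dotsc,g_{k-1},g_{k+1},\dotsc,g_r]\mapsto[g_1,\dotsc,g_{k-1},e,g_{k+1},\dotsc,g_r]$, namely $i^{(k)}$. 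As an alternative to this tower argument, one can instead verify injectivity and the closed-embedding property directly in the affine charts $\cU_\tau$ of \eqref{eq:18}: the map $i^{(k)}$ carries the chart around a $T$-fixed point $[\tau_1,\dotsc,\widehat{\tau_k},\dotsc,\tau_r]$ of $\Sigma(\sigma^{(k)})$ linearly onto the coordinate hyperplane $\{z_k=0\}$ inside the chart around $[\tau_1,\dotsc,\tau_{k-1},e,\tau_{k+1},\dotsc,\tau_r]$, and these charts cover both source and target.

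Finally, the intertwining property is formal: by \eqref{eq:17},
\[
\pi\bigl(i^{(k)}[g_1,\dotsc,g_{k-1},g_{k+1},\dotsc,g_r]\bigr)=g_1\cdots g_{k-1}\,e\,g_{k+1}\cdots g_r B=g_1\cdots g_{k-1}g_{k+1}\cdots g_r B=\pi\bigl([g_1,\dotsc,g_{k-1},g_{k+1},\dotsc,g_r]\bigr),
\]
so $\pi\circ i^{(k)}=\pi$ as maps to $G/B$. The step demanding the most care is the inductive bookkeeping in the middle paragraph: one must confirm that the isomorphisms $Z^{(k)}_j\cong W_j$ really are compatible with the bundle projections and that the resulting embedding at $j=r$ is the literal ``insert $e$ into slot $k$'' map and not a twisted variant of it. Everything else reduces to standard stability properties of closed immersions under base change and the elementary $\PP^1$-bundle structure of Bott--Samelson varieties.
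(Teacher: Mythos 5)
Your main argument is correct, and it is in fact more than the paper supplies: the paper states this lemma as one of ``three standard facts'' and gives no proof, so there is nothing to compare it against. Your route --- realizing the insertion at slot $k$ as a section of the separated ($\PP^1$-bundle) morphism $Z_k\to Z_{k-1}$, hence a closed immersion, and then propagating up the tower by base change, with the identification $Z^{(k)}_j\cong W_j=Z_j\times_{Z_{j-1}}W_{j-1}$ checked fiberwise --- is a clean and complete way to establish the closed embedding, and the intertwining of the two maps $\pi$ is exactly the one-line computation you give. One small caution about your alternative chart argument: the charts $\cU_\tau$ of $\Sigma(\sigma)$ whose $k$-th entry is $e$ cover the image of $i^{(k)}$ but do \emph{not} cover the target $\Sigma(\sigma)$, so the chart computation alone shows $i^{(k)}$ is a locally closed immersion; to conclude the image is closed you still need to invoke properness of $\Sigma(\sigma^{(k)})$ together with separatedness of $\Sigma(\sigma)$ (or simply rely on the tower argument, which needs no such supplement). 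With that caveat noted, the proposal stands.
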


The following Lemma is a simple application of the Borel fixed point theorem (see \cite[Corollary 2.2.2]{Brion} for a similar argument).
\begin{Lemma}
Suppose $v \in W$ is in the image of $\pi : \Sigma( \sigma) \rightarrow G/B$. Then there exists $\tau \leq \sigma$ such that $\pi(\tau) = v$.  
\end{Lemma}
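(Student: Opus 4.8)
The plan is to apply the Borel fixed point theorem to the fiber of $\pi$ over $v$. First I would record one housekeeping point: although $G/B$ is only an ind-scheme, the morphism $\pi$ issues from the projective variety $\Sigma(\sigma)$, so its image is a closed — hence projective — finite-dimensional subvariety of $G/B$. Thus everything below takes place among ordinary complete varieties over the (algebraically closed) base field, which is exactly the setting in which the fixed point theorem applies. Viewing $v \in W$ as the corresponding $T$-fixed point of $G/B$, I set $Z = \pi^{-1}(v)$; this is a closed subscheme of $\Sigma(\sigma)$, hence projective, and it is nonempty precisely by the hypothesis that $v$ lies in the image of $\pi$.

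Next I would check that $Z$ is $T$-stable. The map $\pi$ is $B$-equivariant, hence $T$-equivariant, where $T \subset B$ acts on $\Sigma(\sigma)$ by left multiplication on the first factor (so that $\pi([g_1,\dotsc,g_r]) = g_1\cdots g_r B$ is manifestly $B$-equivariant). Since $v$ is fixed by $T$, it follows that $\pi(T\cdot Z) \subseteq T\cdot\{v\} = \{v\}$, i.e. $T\cdot Z \subseteq Z$.

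Now I would invoke the Borel fixed point theorem: the torus $T$ is a connected solvable linear algebraic group acting on the nonempty complete variety $Z$, so $Z$ contains a $T$-fixed point $p$ (cf. \cite[Corollary~2.2.2]{Brion}). But the $T$-fixed points of $\Sigma(\sigma)$ were identified above as exactly the points $[\tau_1,\dotsc,\tau_r]$ attached to subexpressions $\tau \le \sigma$. Hence $p = [\tau_1,\dotsc,\tau_r]$ for some $\tau \le \sigma$, and from the definition of $\pi$ we get $\pi(\tau) = \tau_1\cdots\tau_r\cdot B = \pi(p) = v$, which is the assertion.

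There is no real obstacle here; the only point requiring (minor) care is the reduction from the ind-scheme $G/B$ to honest finite-type varieties, so that the Borel fixed point theorem is literally applicable. This is dispatched by the remark that the proper morphism $\pi$ out of the projective variety $\Sigma(\sigma)$ has closed image.
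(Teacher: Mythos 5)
Your argument is correct and is precisely the route the paper intends: the paper gives no written proof, saying only that the Lemma is ``a simple application of the Borel fixed point theorem'' (citing Brion), and your application of that theorem to the nonempty, complete, $T$-stable fiber $\pi^{-1}(v)$, together with the identification of the $T$-fixed points of $\Sigma(\sigma)$ with subexpressions, fills in exactly those details. The housekeeping remarks (reduction to finite-type varieties, $T$-equivariance of $\pi$) are appropriate and correct.
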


From this lemma we can deduce the following.
\begin{Lemma}
  \label{lem:image-of-sigma-sigma-k}
 Suppose  $\sigma = (\sigma_1, \dotsc, \sigma_r)$ is a reduced expression. Let $\sigma^{(k)} = (\sigma_1, \dotsc, \widehat{\sigma_k}, \dotsc, \sigma_r)$. Then $\pi(\Sigma(\sigma^{(k)})) \cap B \pi(\sigma)\cdot B = \emptyset$ for all $k \in [r]$.
\end{Lemma}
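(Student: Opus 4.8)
The statement to prove is Lemma~\ref{lem:image-of-sigma-sigma-k}: for a reduced expression $\sigma=(\sigma_1,\dotsc,\sigma_r)$ and $\sigma^{(k)}=(\sigma_1,\dotsc,\widehat{\sigma_k},\dotsc,\sigma_r)$, the image $\pi(\Sigma(\sigma^{(k)}))$ is disjoint from the open Bruhat cell $B\pi(\sigma)\cdot B$. The plan is to use the preceding lemma (Borel fixed point) to reduce from the image of an ind-variety to a statement about $T$-fixed points, and then to a length inequality for Weyl group elements.

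First I would observe that $\pi(\Sigma(\sigma^{(k)}))$ is a closed $B$-invariant subset of $G/B$, being the continuous $B$-equivariant image of a projective variety. Hence if it met $B\pi(\sigma)\cdot B$ at all, it would contain the $T$-fixed point $\pi(\sigma)$ of that cell: indeed a nonempty closed $B$-invariant subset of a Bruhat cell $BwB/B$ must contain $w$, since the $B$-orbit of any point in the cell has $w$ in its closure (the cell is a single $B$-orbit with unique closed point $w$). Actually it is cleaner to argue via the previous lemma directly: if $\pi(\sigma) \in \pi(\Sigma(\sigma^{(k)}))$, then since $\pi(\sigma)=\pi(\sigma)\in W$ lies in the image of $\pi:\Sigma(\sigma^{(k)})\to G/B$, that lemma produces a subexpression $\tau\leq\sigma^{(k)}$ with $\pi(\tau)=\pi(\sigma)$.

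Now I would derive the contradiction from lengths. Since $\sigma$ is reduced, $\ell(\pi(\sigma))=r$. On the other hand $\tau$ is a subexpression of $\sigma^{(k)}$, which is an expression of length $r-1$; therefore $\pi(\tau)$ is a product of at most $r-1$ simple reflections, so $\ell(\pi(\tau))\leq r-1 < r = \ell(\pi(\sigma))$. This contradicts $\pi(\tau)=\pi(\sigma)$. Hence $\pi(\sigma)\notin\pi(\Sigma(\sigma^{(k)}))$, and combined with the first paragraph's observation that any nonempty intersection with the cell $B\pi(\sigma)\cdot B$ would force $\pi(\sigma)$ into the closed set $\pi(\Sigma(\sigma^{(k)}))$, we conclude $\pi(\Sigma(\sigma^{(k)}))\cap B\pi(\sigma)\cdot B=\emptyset$.

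The only genuine subtlety — the "main obstacle" — is justifying that a nonempty closed $B$-stable subset of $G/B$ meeting the cell $B\pi(\sigma)\cdot B$ must contain the point $\pi(\sigma)$. In the finite-dimensional setting this is immediate from the Bruhat decomposition ($B\pi(\sigma)\cdot B$ is a single $B$-orbit and $\pi(\sigma)$ is in the closure of every $B$-orbit inside it, so it lies in the closure of the intersection); in the Kac-Moody ind-scheme setting one restricts to a finite-dimensional Schubert variety $\overline{BwB/B}$ containing both the cell and the (closed, $B$-stable, since $\pi(\Sigma(\sigma^{(k)}))$ has bounded length) image, and runs the same argument there, or simply invokes the Borel fixed point theorem as in the cited Lemma to extract the $T$-fixed point. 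Everything else is a one-line length count.
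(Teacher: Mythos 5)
Your proof is correct and follows essentially the same route the paper intends: reduce to the $T$-fixed point $\pi(\sigma)$ using $B$-stability of the image (the cell $B\pi(\sigma)\cdot B$ being a single $B$-orbit), invoke the preceding Borel-fixed-point lemma to produce a subexpression $\tau\leq\sigma^{(k)}$ with $\pi(\tau)=\pi(\sigma)$, and contradict reducedness by the length count $\ell(\pi(\tau))\leq r-1<r$. No issues to report.
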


\newcommand{\otau}{\overline{\tau}}
\newcommand{\osigma}{\overline{\sigma}}

\begin{proof}[Proof of Theorem \ref{thm:computing-intersection}]
 
Define 
\begin{align}
\notag
S^\eta_\tau = 
& \{ [ \tau_1 e_{ - \alpha_{\sigma_1}}(z_1), \dotsc, \tau_r e_{ - \alpha_{\sigma_r}}(z_r)] \in \cU_\tau  \suchthat\\
\notag
&\qquad\qquad z_k = 0 \textif \langle \eta, -\tau_1 \cdots \tau_k(\alpha_{\sigma_k}) \rangle < 0 \textand z_k \neq 0 \textif \tau_k = e\} 
\end{align}
and consider a point $[ \tau_1 e_{ - \alpha_{\sigma_1}}(z_1), \dotsc, \tau_r e_{ - \alpha_{\sigma_r}}(z_r)] \in C^\eta_\tau \backslash S^{\eta}_\tau$. We must have $\tau_k = e$ and $z_k =0$ for some $k$. Hence, the point $[ \tau_1 e_{ - \alpha_{\sigma_1}}(z_1), \dotsc, \tau_r e_{ - \alpha_{\sigma_r}}(z_r)]$ belongs to $i^{(k)}( \Sigma( (\sigma_1, \dotsc, \widehat{\sigma_k}, \dotsc, \sigma_r) )$. 
Therefore, $\pi([ \tau_1 e_{ - \alpha_{\sigma_1}}(z_1), \dotsc, \tau_r e_{ - \alpha_{\sigma_r}}(z_r)])$ does not belong to $B \pi(\sigma)\cdot B$ by Lemmas \ref{lem:inclusion-of-sigma-k} and \ref{lem:image-of-sigma-sigma-k}. We conclude that
 \begin{align*}
   & \pi^{-1}( B \pi(\sigma) \cdot B) \cap C^{\eta}_\tau \subseteq  S^{\eta}_\tau
     \mathperiod
 \end{align*} 

Conversely suppose we have
$[ \tau_1 e_{ - \alpha_{\sigma_1}}(z_1), \dotsc, \tau_r e_{ - \alpha_{\sigma_r}}(z_r)] \in S^{\eta}_{\tau}$.
We need to show
\begin{align}
  \label{eq:28}
    \tau_1 e_{ - \alpha_{\sigma_1}}(z_1) \cdots \tau_r e_{ - \alpha_{\sigma_r}}(z_r) \in B \pi(\sigma) B
  \mathperiod
\end{align}
We proceed by induction on $r$. By induction, we have
\begin{align}
  \label{eq:29}
  \tau_1 e_{ - \alpha_{\sigma_1}}(z_1) \cdots \tau_{r-1} e_{ - \alpha_{\sigma_{r-1}}}(z_{r-1}) \in B \sigma_1 \cdots \sigma_{r-1}B
  \mathperiod
\end{align}
We also have
\begin{align}
  \label{eq:30}
  \tau_r e_{ - \alpha_{\sigma_r}}(z_r)  \in B \sigma_r B
  \mathperiod  
\end{align}
Indeed, if $\tau_r = e$, then $z_r \in \GG_m$ and
\eqref{eq:30} holds by Lemma \ref{lem:B-s-B}. If $\tau_r = \sigma_r$, then we have $\sigma_r e_{ - \alpha_{\sigma_r}}(z_r) =  e_{ \alpha_{\sigma_r}}(z_r) \sigma_r$, hence we have \eqref{eq:30}. Then by \eqref{eq:29} and Lemma \ref{lem:B-w-s-B}  we have \eqref{eq:28}. Therefore:
\begin{align}
   \notag	
  & \pi^{-1}( B \pi(\sigma)\cdot B) \cap C^{\eta}_\tau = S^{\eta}_\tau
    \mathperiod
 \end{align} 

Finally, if $\tau \notin \sD^{A_{\eta}}_{A_0}(\sigma)$,
then we must have $\tau_k = e$ and $\langle \eta, -\tau_1 \cdots \tau_k(\alpha_{\sigma_k}) \rangle < 0 $ for some $k$. Then we must have $z_k = 0$ by \eqref{eq:22}, which contradicts the conditions on $[ \tau_1 e_{ - \alpha_{\sigma_1}}(z_1), \dotsc, \tau_r e_{ - \alpha_{\sigma_r}}(z_r)]$ in the definition of $S^\eta_\tau$. Hence $S^\eta_\tau = \emptyset$ in this case.
\end{proof}

\begin{proof}[Proof of Theorem \ref{thm:point-count-in-flag-variety}]

Let $w \in W$, and let $\sigma$ be a reduced expression for $w$. Because the map $\pi: \Sigma(\sigma) \rightarrow G/B$ is a bijection over $B w\cdot B \subset G/B$, it induces a bijection of sets:
\begin{align}
  \label{eq:42}
  \bigsqcup_{\tau \in \sD^{\eta}(\sigma): \pi(\tau) = u}\pi^{-1}(B w\cdot B) \cap C^\eta_\tau \cong X^{\eta,u}_w
  \mathperiod
\end{align}
By Theorem \ref{thm:computing-intersection} we conclude that
\begin{align}
  \label{eq:43}
 \# X^{\eta,u}_w(\FF_q) &= \#_{\FF_q}\left(\bigsqcup_{\tau \in \sD^{\eta}(\sigma): \pi(\tau) = u}\pi^{-1}(B w\cdot B) \cap C^\eta_\tau   \right)\\
  \notag
  &= 
\sum_{\tau \in \sD^{\eta}(\sigma): \pi(\tau) = u} v^{2\zeta_\tau^+}(v^2-1)^{\kappa_\tau}|_{v^2=q}
\end{align}
where $\zeta_\tau^+$ and $\kappa_\tau$ are defined by \eqref{E:zeta+} and \eqref{E:kappa}. 
\end{proof}

\section{Some connections to other work}

\subsection{Deodhar's formula for $R$-polynomials}
The Kazhdan-Lusztig involution \cite{KL} is the $\ZZ$-linear algebra involution $\overline{\,\cdot\,}: \cH \rightarrow \cH$ on the Hecke algebra given on generators by $\overline{T}_i = T_i^{-1}$ for all $i \in I$ and $\overline{v}=v^{-1}$. The $R$-polynomials are essentially the matrix of the Kazhdan-Lusztig involution. More precisely, for any pair $u,w \in W$, there is a polynomial $R_{u,w} \in \ZZ[v,v^{-1}]$ defined by
\begin{align}
  \label{eq:37}
  T_w  = \sum_{u \in W} v^{\ell(u) - \ell(w)}R_{u,w} \overline{T}_u
  \mathperiod
\end{align}
It is clear that $ \overline{T}_u = T(u,-A_0)$, so applying \eqref{eq:001}, we have the following formula for the $R_{u,w}$.
\begin{Theorem}\cite[Theorem 1.3]{Deo}
  Let $\sigma$ be a reduced expression for $w$. Then we have
  \begin{align}
    \label{eq:38}
    R_{u,w} = \sum_{\tau \in \sD_{A_0}^{-A_0}(\sigma): \pi(\tau)=u} v^{\ell(w) - \ell(u)}\,m_{\tau\leq\sigma}^{-A_0}
    \mathperiod
  \end{align}
\end{Theorem}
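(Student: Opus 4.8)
The plan is to derive \eqref{eq:38} from the change-of-basis formula exactly as \eqref{eq:001} was derived, but with the orientation $-A_0$ in place of $A_\eta$, after observing that the involuted basis element $\overline{T}_u$ \emph{is} the oriented basis element $T(u,-A_0)$. For this observation: the Kazhdan-Lusztig involution is a ring automorphism with $\overline{T}_i=T_i^{-1}=\Phi(c_i^-)$, so $\overline{T}_u=T_{i_1}^{-1}\cdots T_{i_r}^{-1}$ for any reduced expression $(s_{i_1},\dots,s_{i_r})$ of $u$; on the other hand, for a reduced expression $\sigma'$ the standard fact $\sigma'_1\cdots\sigma'_{k-1}(\alpha_{\sigma'_k})\in\Delta_+$ gives $-\sigma'_1\cdots\sigma'_k(\alpha_{\sigma'_k})\in\Delta_+$, so $\sgn_{-A_0}(-\sigma'_1\cdots\sigma'_k(\alpha_{\sigma'_k}))=-1$ for every $k$, whence $L(\sigma',-A_0)=c^-_{\sigma'_1}\cdots c^-_{\sigma'_r}$ and $T(u,-A_0)=\Phi(L(\sigma',-A_0))=\overline{T}_u$.

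Next I would apply Theorem~\ref{thm:change-of-basis-formula} with $A=A_0$, $A'=-A_0$, and $\tau=\sigma$ a reduced expression for $w$. The sign exponent $\#\cK_{A_0}^{-A_0}(\rho\leq\sigma\leq\sigma)$ is zero because its defining condition requires a slot $k$ with $\tau_k=e$, impossible when $\tau=\sigma$; hence $L(\sigma,A_0)=\sum_{\rho\in\sD_{A_0}^{-A_0}(\sigma\leq\sigma)}L(\rho\leq\sigma,-A_0)$. Applying $\Phi$, the left side is $T(\sigma,A_0)=T_w$, while on the right each $\Phi(L(\rho\leq\sigma,-A_0))$ equals $m^{-A_0}_{\rho\leq\sigma}\,T(\pi(\rho),-A_0)=m^{-A_0}_{\rho\leq\sigma}\,\overline{T}_{\pi(\rho)}$ by the previous paragraph, with no stray sign: the scalars $\Phi(f^{\epsilon_k}_{\sigma_k})=\pm(v-v^{-1})$ from the folded slots $\rho_k=e$ are all $+(v-v^{-1})$ because distinguishedness of $\rho$ relative to $\tau=\sigma$ forces $\epsilon_k=\sgn_{-A_0}(-\rho_1\cdots\rho_k(\alpha_{\sigma_k}))$ to equal $\sgn_{A_0}(-\sigma_1\cdots\sigma_k(\alpha_{\sigma_k}))=+1$, the last equality being reducedness of $\sigma$; this is exactly the computation behind \eqref{eq:001}. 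Grouping terms by the value $\pi(\rho)=u$ gives $T_w=\sum_{u\in W}\big(\sum_{\rho\in\sD_{A_0}^{-A_0}(\sigma\leq\sigma):\,\pi(\rho)=u}m^{-A_0}_{\rho\leq\sigma}\big)\,\overline{T}_u$. Finally, comparing with the defining relation \eqref{eq:37} and using that $\{\overline{T}_u\}_{u\in W}$ is a $\ZZ[v,v^{-1}]$-basis of $\cH$ (being the image of the basis $\{T_u\}$ under the semilinear involution $\overline{\,\cdot\,}$), I match coefficients of $\overline{T}_u$ to get $v^{\ell(u)-\ell(w)}R_{u,w}=\sum_{\rho\in\sD_{A_0}^{-A_0}(\sigma\leq\sigma):\,\pi(\rho)=u}m^{-A_0}_{\rho\leq\sigma}$, which is \eqref{eq:38}.

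The step to spell out carefully --- and the only one with any content beyond formal manipulation --- is the sign bookkeeping under $\Phi$: that for a reduced $\sigma$ and $\rho\in\sD_{A_0}^{-A_0}(\sigma\leq\sigma)$ every folded factor maps to $+(v-v^{-1})$ (so $m^{-A_0}_{\rho\leq\sigma}=(v-v^{-1})^{\kappa_\rho}$, sign-free) while the surviving $c$-factors multiply out to $T(\pi(\rho),-A_0)$. This is precisely the phenomenon flagged parenthetically after \eqref{eq:001}, and the argument given there applies verbatim with $A_\eta$ replaced by $-A_0$; the remaining ingredients (identification of $\overline{T}_u$, vanishing of $\cK$, the final coefficient comparison) are routine.
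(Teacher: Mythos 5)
Your proposal is correct and follows essentially the same route as the paper: identify $\overline{T}_u=T(u,-A_0)$ and then apply the change-of-basis formula (the analogue of \eqref{eq:001} with $-A_0$ in place of $A_\eta$, where reducedness of $\sigma$ and $A=A_0$ kill all signs) and compare coefficients against \eqref{eq:37}. Your extra verifications (all $\epsilon_k=-1$ for a reduced expression under $-A_0$, vanishing of $\cK$, the sign bookkeeping for folded slots) are exactly the details the paper leaves implicit, so there is nothing to correct.
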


Theorem~\ref{thm:point-count-in-flag-variety} translates in this special case to the fact that $R_{u,w}|_{v^2=q}$ is equal to $\#_{\FF_q}(B^- u\cdot B\cap B w\cdot B)$, where $B^-$ is the Borel subgroup opposite to $B$.

\subsection{Affine type}
For the rest of the paper, let us consider the case when $W$ is the Weyl group of an affine Kac-Moody group $G$. For simplicity we assume that $G$ is of untwisted affine type. Thus $W=Q^\vee_\circ\rtimes W_\circ$, where $W_\circ$ and $Q^\vee_\circ$ are the Weyl group and coroot lattice of a finite type Kac-Moody group $G_\circ$. Write $I=I_\circ\sqcup\{0\}$, where $I_\circ$ is the index set for the simple roots of $G_\circ$.

The Hecke algebra $\cH$ of $W$, the affine Hecke algebra, has another presentation known as the Bernstein presentation, which is analogous to the semidirect product decomposition of $W$. This is expressed as a linear isomorphism
$
\cH \cong \ZZ[v^{\pm 1}][Q^\vee_\circ]\otimes_{\ZZ[v^{\pm 1}]}\cH_\circ
$
where $\cH_\circ$ is the Hecke algebra of $W_\circ$ and $\ZZ[v^{\pm 1}][Q^\vee_\circ]$ is the group algebra of $Q^\vee_\circ$, with basis elements $X^\lambda$ for $\lambda\in Q^\vee_\circ$. These elements satisfy $X^\lambda X^\mu = X^{\lambda+\mu}$, and the tensor factors in the isomorphism above each form subalgebras of $\cH$. The relations between these subalgebras are given by
\begin{align}\label{E:TX-reln}
T_iX^\lambda = X^{s_i(\lambda)}T_i + (v-v^{-1})\frac{X^\lambda-X^{s_i(\lambda)}}{1-X^{\alpha_i}}
\end{align}
for $i\in I_\circ$ and $\lambda\in Q^\vee_\circ$. For more details on this presentation of $\cH$, we refer the reader to \cite{Lus}.

Let $\theta$ be the highest (long) root of $G_\circ$. The element $T_0$ from $\cH$ is given in the Bernstein presentation as $T_0 = T_{s_\theta}^{-1} X^{-\theta^\vee}$.

We have a basis of $\cH$ given by $\{X^\lambda T_u \suchthat \lambda\in Q^\vee_\circ,\, u\in W_\circ\}$ in the Bernstein presentation. It turns out that this is the oriented basis for the periodic orientation. More precisely, for any element $x=\varpi^\lambda u\in W$, one has $T(x,A_\infty)=X^{\wt(x)}T_{\dir(x)}=X^\lambda T_u$ (see, e.g., \cite[(3.2.10) and (3.5.1)]{Mac}).

Let $\bI\subset G$ be the Iwahori subgroup (i.e., the standard Borel subgroup of $G$) and let $\bU^\pm$ be the subgroups of $G$ corresponding to $A_{\pm\infty}$ as in Remark~\ref{R:BD}. 
By Theorem~\ref{thm:point-count-in-flag-variety}, the coefficient of the Bernstein basis element $T(x,A_\infty)$ in $T(w,A_0)$ for $x,w\in W$ in the expansion \eqref{eq:001} is proportional to 
$\#_{\FF_q}\left(\bU^+x\cdot \bI \cap \bI w\cdot \bI\right)$, with the coefficient of proportionality given by $v^{\ell(w)+\ell_{A_\infty}(x)}=v^{\ell(w)+\ell(u)-\langle\lambda,2\rho\rangle}$ if $x=\varpi^\lambda u$.
A similar interpretation holds for $T(x,A_{-\infty})$, with the $\bU^-$-orbit of $x$ instead of the $\bU^+$-orbit; in this special case Theorem~\ref{thm:point-count-in-flag-variety} was proved in \cite[Theorem 7.1]{PRS}.

\begin{Remark}
 The groups $\bU^\pm$ also have the following concrete realization. As $G$ is untwisted affine, it can be realized as a central extension of a loop group of $G_\circ$. For us the relevant point is that there is a map $G \rightarrow G_\circ$ given by evaluating a loop at $1$. Let $U^\pm_\circ$ be the positive and negative unipotent subgroups of $G_\circ$ corresponding to the set of positive and negative roots. Modulo the center of $G$, $\bU^\pm$ is precisely the preimage of $U^\pm_\circ$ under the map $G \rightarrow G_\circ$. Because the central extension splits over $U^\pm_\circ$, we can use the splitting to explicitly see that $\bU^\pm$ is a copy of the loop group of $U^\pm_\circ$ embedded as a subgroup of $G$.
\end{Remark}

\subsection{Coefficients of nonsymmetric Macdonald polynomials}
Continuing with the notation from the previous subsection, let us consider the polynomial representation of the affine Hecke algebra $\cH$. This can be described abstractly as the induced representation $\mathrm{Ind}_{\cH_\circ}^\cH (\mathrm{triv})$, where $\mathrm{triv}$ is the one-dimensional representation in which $T_i$ for $i\in I_\circ$ acts by the scalar $v$. Using the Bernstein presentation and in particular \eqref{E:TX-reln}, one sees that the polynomial representation is linearly isomorphic to $\ZZ[v^{\pm 1}][Q^\vee_\circ]$, with the action of $X^\lambda$ given by multiplication in the group algebra and the action of $T_i$ for $i\in I_\circ$ given by the Demazure-Lusztig operator
\begin{align}
  T_i = v s_i + \frac{v-v^{-1}}{1-X^{\alpha_i}}(1-s_i)
  \mathperiod
\end{align}
The action of the element $T_0$ from $\cH$ in the polynomial representation is given via the Bernstein presentation by the formula $T_0 = T_{s_\theta}^{-1} X^{-\theta^\vee}$.

The nonsymmetric Macdonald polynomials $E_\lambda(\mathsf{q},v)$ for $\lambda\in Q^\vee_\circ$ form a distinguished basis in the polynomial representation of $\cH$ (with coefficients extended to $\QQ(\mathsf{q},v)$ --- we use the letter $\mathsf{q}$ to distinguish this parameter from the $q$ in $\FF_q$). We refer the interested reader to \cite[\S3.3]{Ch2} for an introduction to these remarkable polynomials and a discussion of their origins. Our present aim is to relate the specializations of $E_\lambda(\mathsf{q},v)$ at $\mathsf{q}=0,\infty$ to the change of basis and point-counting problems considered in this paper.

To this end, we make use of Cherednik's ``intertwiner'' construction of the $E_\lambda(\mathsf{q},v)$, which is based on the operators in the polynomial representation (see \cite[Theorem 5.1]{Ch1}, or \cite[(3.3)]{RY} for our conventions). Fix a reduced expression $\sigma_\lambda=(s_{i_1},\dotsc,s_{i_r})$ such that $\pi(\sigma_\lambda)=m_\lambda$, where $m_\lambda=\varpi^\lambda u_\lambda^{-1}$ is the minimum coset representative for $\varpi^\lambda$ in $W/W_\circ$. Then the intertwiner construction is as follows:
\begin{align}\label{E:intertwiners}
&E_\lambda(\mathsf{q},v)\\ \notag&= v^{-\ell(u_\lambda)}\left(T_{i_1}+\frac{v-v^{-1}}{\mathsf{q}^{a_1}v^{b_1}-1}\right)\left(T_{i_2}+\frac{v-v^{-1}}{\mathsf{q}^{a_2}v^{b_2}-1}\right)\dotsm\left(T_{i_r}+\frac{v-v^{-1}}{\mathsf{q}^{a_r}v^{b_r}-1}\right)\cdot 1
\end{align}
where $a_k\in\ZZ_{>0}$ and $b_k\in\ZZ$ are explicit integers. 

\comment{
({\tt\color{red} *** Dinakar (to remove later):} The monomial $q^{a_k}v^{b_k}$ is the eigenvalue of $Y^{-\alpha_{i_k}}$ on $E_\mu$ where $\mu+d={s_{i_{k+1}}\dotsm s_{i_\ell}(\lambda+d)\mod c}$, i.e., this is the level one action. Explicitly, $a_k=(\alpha_i,\mu+d)$ and $b_k=-(\alpha_i,u_\mu^{-1}(\rho^\vee))$ where $u_\mu$ is the shortest finite Weyl group element such that $u_\mu(\mu)$ is antidominant. Since we work with a min rep $m_\lambda$ we know that $a_k>0$. Actually, we are guaranteed $b_k\neq 0$, and $b_k>0$ whenever $i_k\neq 0$ as well. {\tt\color{red}***})
}

At the specializations $\mathsf{q}=0,\infty$ we obtain
\begin{align}
E_\lambda(\infty,v) &= v^{-\ell(u_\lambda)}T_{i_1}T_{i_2}\dotsm T_{i_r}\cdot 1\notag\\
&= v^{-\ell(u_\lambda)}T(\sigma_\lambda,A_0)\cdot 1\\
E_\lambda(0,v) &= v^{-\ell(u_\lambda)}\left(T_{i_1}-(v-v^{-1})\right)\left(T_{i_2}-(v-v^{-1})\right)\dotsm\left(T_{i_r}-(v-v^{-1})\right)\cdot 1\notag\\
&= v^{-\ell(u_\lambda)}T_{i_1}^{-1}T_{i_2}^{-1}\dotsm T_{i_r}^{-1}\cdot 1\notag\\
                    &= v^{-\ell(u_\lambda)}T(\sigma_\lambda,-A_0)\cdot 1
                      \mathperiod
\end{align}

In order to compute more effectively in the polynomial representation, we consider the change of basis to the Bernstein basis (the oriented basis corresponding to $A_\infty$):
\begin{align}
T(\sigma_\lambda,A_0) &= \sum_{\tau\in\sD_{A_0}^{A_\infty}(\sigma_\lambda)} m_{\tau\le\sigma}^{A_\infty}\,T(\pi(\tau),A_{\infty})\\
T(\sigma_\lambda,-A_0) &= \overline{T(\sigma_\lambda,A_0)}\\
\notag
&= \sum_{\tau\in\sD_{A_0}^{-A_\infty}(\sigma_\lambda)}\overline{m^{-A_\infty}_{\tau \leq \sigma}\,T(\pi(\tau),-A_\infty)}\\
\notag
                      &= \sum_{\tau\in\sD_{A_0}^{-A_\infty}(\sigma_\lambda)}\overline{m^{-A_\infty}_{\tau \leq \sigma}}\,T(\pi(\tau),A_\infty)
                        \mathperiod
\end{align}
Applying these to $1$ in the polynomial representation, we obtain
\begin{align}
\label{E:Eatinftysub}
E_\lambda(\infty,v) &= v^{-\ell(u_\lambda)}T(\sigma_\lambda,A_0)\cdot 1\\
\notag
&= v^{-\ell(u_\lambda)}\sum_{\tau\in\sD_{A_0}^{A_\infty}(\sigma_\lambda)} m_{\tau\le\sigma}^{A_\infty}\,T(\pi(\tau),A_{\infty})\cdot 1\\
\notag
&=v^{-\ell(u_\lambda)}\sum_{\tau\in\sD_{A_0}^{A_\infty}(\sigma_\lambda)} m_{\tau\le\sigma}^{A_\infty}\,X^{\wt(\pi(\tau))}v^{\ell(\dir(\pi(\tau)))}\\
\label{E:Eat0sub}
E_\lambda(0,v) &= v^{-\ell(u_\lambda)}T(\sigma_\lambda,-A_0)\cdot 1\\
\notag
&= v^{-\ell(u_\lambda)}\sum_{\tau\in\sD_{A_0}^{-A_\infty}(\sigma_\lambda)}\overline{m^{-A_\infty}_{\tau \leq \sigma}}\,T(\pi(\tau),A_\infty)\cdot 1\\
\notag
                    &= v^{-\ell(u_\lambda)}\sum_{\tau\in\sD_{A_0}^{-A_\infty}(\sigma_\lambda)}\overline{m^{-A_\infty}_{\tau \leq \sigma}}X^{\wt(\pi(\tau))}v^{\ell(\dir(\pi(\tau)))}
                      \mathperiod
\end{align}
Let $[X^\nu]f$ denote the coefficient of the monomial $X^\nu$ in $f\in\ZZ[v^{\pm 1}][Q_\circ^\vee]$. Using Theorem~\ref{thm:point-count-in-flag-variety} and Lemma~\ref{L:A-inf-length}, we deduce the following:
\begin{Theorem}
For any $\lambda,\nu\in Q^\vee_\circ$, we have
\begin{align}
\label{E:Eatinfty}
\left.[X^\nu]E_\lambda(\infty,v)\right|_{v^2=q}&= q^{\langle\lambda_-+\nu,\rho\rangle}\#_{\FF_q}\left(\sqcup_{u\in W_\circ}\bU^+\varpi^\nu u\cdot \bI\cap \bI m_\lambda\cdot \bI\right)\\
\label{E:Eat0}
\left.[X^\nu]E_\lambda(0,v)\right|_{v^{-2}=q}&=
q^{\langle\lambda_--\nu,\rho\rangle+\ell(u_\lambda)}\#_{\FF_{q}}\left(\sqcup_{u\in W_\circ}\bU^-\varpi^\nu u\cdot \bI\cap \bI m_\lambda\cdot \bI\right)
\end{align}
where $\lambda_-$ is the unique antidominant weight in the $W_\circ$-orbit of $\lambda$.
\end{Theorem}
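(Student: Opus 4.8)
The plan is to extract the coefficient $[X^\nu]$ directly from the expansions \eqref{E:Eatinftysub} and \eqref{E:Eat0sub}, regroup the sum over distinguished subexpressions according to the value of $\pi(\tau)$, recognize the resulting $v$-polynomial attached to each value as one of the point-counting generating functions $M^{A,u}_{\sigma_\lambda}$ of \eqref{eq:32} (or its Kazhdan--Lusztig conjugate), and conclude with Theorem~\ref{thm:point-count-in-flag-variety} together with the identification $X^{A_{\pm\infty},u}_w=\bU^\pm u\cdot\bI\cap\bI w\cdot\bI$ coming from Remark~\ref{R:BD} and the affine-type notation.

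For $\mathsf{q}=\infty$ I would start from \eqref{E:Eatinftysub}: taking $[X^\nu]$ keeps exactly the terms with $\wt(\pi(\tau))=\nu$, and writing such a term as $\pi(\tau)=\varpi^\nu u$ with $u=\dir(\pi(\tau))\in W_\circ$ gives
\begin{align*}
[X^\nu]E_\lambda(\infty,v)=v^{-\ell(u_\lambda)}\sum_{u\in W_\circ}v^{\ell(u)}\sum_{\tau\in\sD_{A_0}^{A_\infty}(\sigma_\lambda)\,:\,\pi(\tau)=\varpi^\nu u} m^{A_\infty}_{\tau\le\sigma_\lambda}.
\end{align*}
Next I would apply \eqref{eq:32} (with $w=m_\lambda=\pi(\sigma_\lambda)$ and orientation $A_\infty$) to replace the inner sum by $v^{-\ell(m_\lambda)-\ell_{A_\infty}(\varpi^\nu u)}M^{A_\infty,\varpi^\nu u}_{\sigma_\lambda}$, and then substitute $\ell_{A_\infty}(\varpi^\nu u)=-\langle\nu,2\rho\rangle+\ell(u)$ from Lemma~\ref{L:A-inf-length}; this cancels the factors $v^{\ell(u)}$ and leaves a $u$-independent prefactor $v^{-\ell(u_\lambda)-\ell(m_\lambda)+\langle\nu,2\rho\rangle}$ in front of $\sum_{u\in W_\circ}M^{A_\infty,\varpi^\nu u}_{\sigma_\lambda}$. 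Using the length identity $\ell(m_\lambda)=\ell(\varpi^\lambda)-\ell(u_\lambda)=-\langle\lambda_-,2\rho\rangle-\ell(u_\lambda)$ --- the first equality because $m_\lambda$ is the minimal-length element of the coset $\varpi^\lambda W_\circ$, so that $\varpi^\lambda=m_\lambda u_\lambda$ is length-additive, the second because $\ell(\varpi^\lambda)$ depends only on the $W_\circ$-orbit of $\lambda$ and $\lambda_-$ is antidominant --- this prefactor becomes $v^{\langle\lambda_-+\nu,2\rho\rangle}$. Specializing at $v^2=q$, applying Theorem~\ref{thm:point-count-in-flag-variety} to each $M^{A_\infty,\varpi^\nu u}_{\sigma_\lambda}$, and rewriting $X^{A_\infty,\varpi^\nu u}_{m_\lambda}=\bU^+\varpi^\nu u\cdot\bI\cap\bI m_\lambda\cdot\bI$ via Remark~\ref{R:BD} should produce \eqref{E:Eatinfty}.

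For $\mathsf{q}=0$ I would run the identical computation on \eqref{E:Eat0sub}, now with $A_{-\infty}$ in place of $A_\infty$, with $\ell_{A_{-\infty}}(\varpi^\nu u)=\langle\nu,2\rho\rangle-\ell(u)$, and carrying the Kazhdan--Lusztig bar along. Bar-conjugating \eqref{eq:32} turns the inner sum over $\tau$ with $\pi(\tau)=\varpi^\nu u$ into $v^{\ell(m_\lambda)+\ell_{A_{-\infty}}(\varpi^\nu u)}\,\overline{M^{A_{-\infty},\varpi^\nu u}_{\sigma_\lambda}}$; the $v^{\ell(u)}$ factors again cancel, and --- using the same identity for $\ell(m_\lambda)$ and the $v^{-\ell(u_\lambda)}$ already present in \eqref{E:Eat0sub} --- the surviving prefactor comes out to $v^{-2\ell(u_\lambda)-2\langle\lambda_--\nu,\rho\rangle}$. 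Since $M^{A_{-\infty},u}_{\sigma_\lambda}$ is a polynomial in $v^2$, one has $\overline{M^{A_{-\infty},u}_{\sigma_\lambda}}\big|_{v^{-2}=q}=M^{A_{-\infty},u}_{\sigma_\lambda}\big|_{v^2=q}$, so specializing at $v^{-2}=q$ turns the prefactor into $q^{\ell(u_\lambda)+\langle\lambda_--\nu,\rho\rangle}$ and each summand into $\#X^{A_{-\infty},\varpi^\nu u}_{m_\lambda}(\FF_q)=\#_{\FF_q}(\bU^-\varpi^\nu u\cdot\bI\cap\bI m_\lambda\cdot\bI)$ by Theorem~\ref{thm:point-count-in-flag-variety} and Remark~\ref{R:BD}; this should yield \eqref{E:Eat0}.

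The whole argument is a chain of substitutions, so I expect the main obstacle to be purely bookkeeping. The two places that need genuine care are: stating the length identity $\ell(m_\lambda)=-\langle\lambda_-,2\rho\rangle-\ell(u_\lambda)$ with the correct conventions --- it matters here that $m_\lambda$ is a minimal representative in $W/W_\circ$, so that $\varpi^\lambda=m_\lambda u_\lambda$ (and not $u_\lambda m_\lambda$) is the length-additive factorization --- and keeping the bar involution consistent against the two different specializations $v^2=q$ versus $v^{-2}=q$ in the $\mathsf{q}=0$ case, where a stray sign or an inverted power of $v$ would corrupt the final exponent of $q$. I would therefore carry out the $\mathsf{q}=0$ bookkeeping most carefully, and double-check both formulas against the leading term $\nu=\lambda_-$ and against a small-rank example.
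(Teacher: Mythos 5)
Your proposal is correct and is essentially the paper's own argument: the paper derives \eqref{E:Eatinftysub} and \eqref{E:Eat0sub} and then deduces the theorem exactly by extracting $[X^\nu]$, invoking \eqref{eq:32}/Theorem~\ref{thm:point-count-in-flag-variety} for the orientations $A_{\pm\infty}$ (realized as $A_\eta$ for a regular cocharacter, with $\bU^\pm$ as in Remark~\ref{R:BD}), and using Lemma~\ref{L:A-inf-length} together with $\ell(m_\lambda)=-\langle\lambda_-,2\rho\rangle-\ell(u_\lambda)$. Your bookkeeping, including the bar-involution versus the $v^{-2}=q$ specialization in the $\mathsf{q}=0$ case, checks out and reproduces the stated exponents.
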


\comment{
By \cite[Proposition 6.1]{OS}, we have
\begin{align}
[X^\mu]E_\lambda(0,t)
&= \sum_{\substack{p\in\mathcal{B}^+(m_\lambda)\\\wt(p)=\mu}}v^{\ell(\dir(p))+|J(p)|-\ell(u_\lambda^{-1})}(t^{-1}-1)^{|J(p)|}\\
&= \sum_{\substack{p\in\mathcal{B}^+(m_\lambda)\\\wt(p)=\mu}}v^{\ell(\dir(p))+2m_+(p)+n(p)-\ell(u_\lambda^{-1})}t^{-m_+(p)}(t^{-1}-1)^{n(p)}.
\end{align}
}

\comment{
\begin{align}
\ell(\dir(p))+2m_+(p)+n(p)-\ell(u_\lambda^{-1}) &= \ell(\dir(p))+\ell(m_\lambda)+\ell_{A_{-\infty}}(\ep(p))-\ell(u_\lambda^{-1})\\
&= \ell(\dir(p))+\ell(m_\lambda)+\langle\mu,2\rho\rangle-\ell(\dir(p))-\ell(u_\lambda^{-1})\\
&= \ell(m_\lambda)+\langle\mu,2\rho\rangle-\ell(u_\lambda^{-1})\\
&= \ell(t_\lambda)+\langle\mu,2\rho\rangle-2\ell(u_\lambda^{-1})\\
&= -\langle\lambda_--\mu,2\rho\rangle-2\ell(u_\lambda^{-1}).
\end{align}
}

\comment{
Hence
\begin{align}
[X^\mu]E_\lambda(0,t) 
&= t^{-\langle\lambda_--\mu,\rho\rangle-\ell(u_\lambda)}|(\sqcup_{u\in W_\circ}U^- \varpi^\mu u\cdot I)\cap I m_\lambda \cdot I|_{\mathbb{F}_{t^{-1}}}.
\end{align}
}

\begin{Remark}
Formulas \eqref{E:Eatinftysub} and \eqref{E:Eat0sub} are special cases of the Ram-Yip formula \cite[Theorem~3.1]{RY}, which gives a combinatorial expression for $E_\lambda(\mathsf{q},v)$ with both parameters. Our change of basis computations leading to these formulas are entirely parallel to the proof of the Ram-Yip formula, but they are simpler at $\mathsf{q}=0,\infty$. 
\end{Remark}

\subsubsection{Comparison with Ion's formula}
Ion has proved a slightly different version of \eqref{E:Eat0} in \cite[Theorem 4.2]{Ion}, namely
\begin{align}\label{E:Ion-formula}
\left.[X^\nu]E_\lambda(0,v)\right|_{v^{-2}=q} &= q^{\langle\lambda_-+\nu,\rho\rangle+\ell(u_\lambda)-\ell(w_0)}\#_{\FF_q}(\bU^- \varpi^{-\nu} \cdot \bI\cap \bK\varpi^{-\lambda} \cdot \bI)
\end{align}
where $\bK=\sqcup_{u\in W_\circ}\bI w\bI$ and $w_0$ is the long element of $W_\circ$.
One can establish the equality of the two formulas as follows.\footnote{It may help the reader to be aware that \cite{Ion} uses different conventions for the Macdonald polynomial parameters---namely our $\mathsf{q}=0$ and $v$ arbitrary corresponds to his $\mathsf{q}=\infty$ and $t=v^{-2}$.}

Let $G\xrightarrow{r} G/\bI \xrightarrow{p} G/\bK$ be the canonical projections. Observe that $p(X\cap Y)=p(X)\cap p(Y)$ for any $X,Y\subset G/\bI$ such that $r^{-1}(X)$ is right $\bK$-stable. Both of the sets $\sqcup_{u\in W_\circ}\bU^-\varpi^\nu u\bI$ and $\sqcup_{u\in W_\circ} \bI m_\lambda u\bI$ are right $\bK$-stable (for the former, see, e.g., \cite[Corollary 1]{BD}).

Since $m_\lambda$ is a minimal coset representative, the restriction $p_{|_{\bI m_\lambda\cdot \bI}} : \bI m_\lambda \cdot \bI \to \bI \varpi^\lambda \cdot \bK$ is an isomorphism. Hence it restricts further to a bijection between $(\sqcup_{u\in W_\circ}\bU^-\varpi^\nu u\cdot \bI) \cap \bI m_\lambda \cdot \bI$ and its image under $p$. We obtain
\begin{align}
  \#_{\FF_q}(\sqcup_{u\in W_\circ}\bU^-\varpi^\nu u\cdot \bI \cap \bI m_\lambda \cdot \bI) &= \#_{\FF_q}(\bU^-\varpi^\nu\cdot \bK\cap \bI \varpi^\lambda\cdot \bK)\label{E:1}
\mathperiod                                                                                             
\end{align}

On the other hand, one can verify each fiber of the map $p_{|_{\bU^-\varpi^\nu\cdot \bI}} : \bU^-\varpi^\nu\cdot \bI \to \bU^-\varpi^\nu\cdot \bK$ contains exactly $q^{\ell(w_0)}$ points, and that
the same is true for the further restriction of $p$ to $\bU^-\varpi^\nu\cdot \bI \cap (\sqcup_{u\in W_\circ} \bI m_\lambda u\cdot \bI)$. Therefore
\begin{align}
\#_{\FF_q}(\bU^-\varpi^\nu\cdot \bK\cap \bI \varpi^\lambda\cdot \bK) =
  q^{-\ell(w_0)}\#_{\FF_q}(\bU^-\varpi^\nu\cdot \bI \cap (\sqcup_{u\in W_\circ} \bI m_\lambda u\cdot \bI))\label{E:2}
  \mathperiod
\end{align}

Finally, we claim that for any $x\in W$ and $\nu\in Q_\circ^\vee$ one has 
\begin{align}
  \#_{\FF_q}(\bU^-\varpi^\nu\cdot\bI \cap \bI x\cdot\bI) &= q^{\langle\nu ,2\rho\rangle}\#_{\FF_q}(\bU^-\varpi^{-\nu}\bI\cap\bI x^{-1}\cdot \bI)
\mathperiod                                                          
\end{align}
This follows from Theorem~\ref{thm:point-count-in-flag-variety} and the following lemma, which gives a termwise equality of the respective sums from \eqref{eq:41}, up to the factor $\ell_{A_{-\infty}}(\varpi^\nu)=\langle\nu,2\rho\rangle$.
\begin{Lemma}
Suppose $\nu\in Q^\vee_\circ$ and $\sigma$ is a reduced expression of length $r$. Then the map $\tau=(\tau_1,\dotsc,\tau_r)\mapsto(\tau_r,\dotsc,\tau_1)=:\tau'$ reversing a subexpression gives a bijection
\begin{align}
\{\tau\in\sD_{A_0}^{A_{-\infty}}(\sigma) \suchthat \pi(\tau)=\varpi^\nu\}&\xrightarrow{\sim}\{\tau\in\sD_{A_0}^{A_{-\infty}}(\sigma')\suchthat \pi(\tau)=\varpi^{-\nu}\}
\end{align}
such that $\kappa_\tau=\kappa_{\tau'}$ and $\zeta^\pm_\tau=\zeta^\mp_{\tau'}$.
\end{Lemma}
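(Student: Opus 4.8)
The plan is to show directly that reversal $\tau\mapsto\tau'$ is the asserted bijection, by verifying in turn: (i) $\tau'$ is a subexpression of $\sigma'$ with $\pi(\tau')=\varpi^{-\nu}$; (ii) $\kappa_\tau=\kappa_{\tau'}$ and $\zeta^\pm_\tau=\zeta^\mp_{\tau'}$; and (iii) $\tau\in\sD_{A_0}^{A_{-\infty}}(\sigma)$ if and only if $\tau'\in\sD_{A_0}^{A_{-\infty}}(\sigma')$. Throughout, $\zeta^\pm$ is taken with respect to $A_{-\infty}$, as in the sum \eqref{eq:41} for $M^{\eta,u}_\sigma$ with $A_\eta=A_{-\infty}$.

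The easy points come first. Since the reverse of a reduced word is reduced, $\sigma'=(\sigma_r,\dotsc,\sigma_1)$ is a reduced expression; plainly $\tau'_j\in\{e,\sigma'_j\}$, so $\tau'\le\sigma'$, and $\kappa_\tau=\#\{k:\tau_k=e\}=\kappa_{\tau'}$. As each $\tau_k\in\{e,\sigma_k\}$ is an involution, $\pi(\tau')=\tau_r\cdots\tau_1=(\tau_1\cdots\tau_r)^{-1}=\pi(\tau)^{-1}=\varpi^{-\nu}$. For the $\zeta$'s, recall from the proof of Lemma~\ref{L:kappa+2tau} that $\zeta^+_\tau-\zeta^-_\tau=\ell_{A_{-\infty}}(\pi(\tau))$, while obviously $\zeta^+_\tau+\zeta^-_\tau=\#\{k:\tau_k=\sigma_k\}=r-\kappa_\tau$; the same identities hold for $\tau'$. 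Using $\kappa_\tau=\kappa_{\tau'}$ together with $\ell_{A_{-\infty}}(\varpi^{\pm\nu})=\pm\langle\nu,2\rho\rangle$ from Lemma~\ref{L:A-inf-length}, we get $\zeta^+_\tau+\zeta^-_\tau=\zeta^+_{\tau'}+\zeta^-_{\tau'}$ and $\zeta^+_\tau-\zeta^-_\tau=-(\zeta^+_{\tau'}-\zeta^-_{\tau'})$, whence $\zeta^+_\tau=\zeta^-_{\tau'}$ and $\zeta^-_\tau=\zeta^+_{\tau'}$.

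For the main point (iii), write $w^{(k)}=\tau_1\cdots\tau_k$, so that $w^{(0)}=e$ and $w^{(r)}=\varpi^\nu$. Since $\sigma$ is reduced, $-\sigma_1\cdots\sigma_k(\alpha_{\sigma_k})=\sigma_1\cdots\sigma_{k-1}(\alpha_{i_k})\in\Delta_+$, so the defining condition for $\tau\in\sD_{A_0}^{A_{-\infty}}(\sigma)$ becomes: for every $k$ with $\tau_k=e$, one has $w^{(k-1)}(\alpha_{i_k})<_{A_{-\infty}}0$. On the other side, put $k=r+1-j$; a short telescoping computation with the involutions $\tau_m$ gives $\tau'_1\cdots\tau'_{j-1}=(\tau_1\cdots\tau_r)^{-1}(\tau_1\cdots\tau_k)=\varpi^{-\nu}w^{(k)}$, and since $\tau'_j=\tau_k$, $\alpha_{\sigma'_j}=\alpha_{i_k}$, and $\tau_k=e$ forces $w^{(k)}=w^{(k-1)}$, the defining condition for $\tau'\in\sD_{A_0}^{A_{-\infty}}(\sigma')$ becomes: for every $k$ with $\tau_k=e$, one has $\varpi^{-\nu}\bigl(w^{(k-1)}(\alpha_{i_k})\bigr)<_{A_{-\infty}}0$. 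It therefore suffices to observe that every translation element $\varpi^\mu$ preserves the orientations $A_{\pm\infty}$: writing a real root as $\alpha+n\delta$ with $\alpha\in\Delta_\circ$, the root-action formula in the proof of Lemma~\ref{L:A-inf-length} shows that $\varpi^\mu(\alpha+n\delta)$ has the same classical part $\alpha$, while the sign of a root with respect to $A_{\pm\infty}$ depends only on its classical part. Hence the two conditions coincide, so $\tau\mapsto\tau'$ maps $\{\tau\in\sD_{A_0}^{A_{-\infty}}(\sigma):\pi(\tau)=\varpi^\nu\}$ into $\{\tau\in\sD_{A_0}^{A_{-\infty}}(\sigma'):\pi(\tau)=\varpi^{-\nu}\}$; running the same argument with $(\tau',\sigma',-\nu)$ in place of $(\tau,\sigma,\nu)$ gives the opposite inclusion, and since reversal is an involution this shows it is a bijection.

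The main obstacle is exactly step (iii): one must transport the distinguished condition correctly through the index reversal $k\leftrightarrow r+1-j$, keeping track of which partial product ($w^{(k-1)}$ versus $w^{(k)}$) appears, after which everything collapses to the single observation that translation elements preserve $A_{\pm\infty}$ — which is precisely why the statement is restricted to subexpressions whose product is a translation element.
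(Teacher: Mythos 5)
Your proof is correct and follows essentially the same route as the paper's argument: in both, the crux is the identity $\tau'_1\cdots\tau'_{j-1}=\varpi^{-\nu}\,\tau_1\cdots\tau_k$ relating partial products of $\tau'$ to those of $\tau$, combined with the observation that translation elements do not change the classical part of an affine root and hence preserve signs with respect to $A_{\pm\infty}$. The only cosmetic difference is that the paper reads off the exchange $\zeta^{\pm}_\tau=\zeta^{\mp}_{\tau'}$ slot-by-slot from the same sign computation (the sign flips exactly at the crossing slots $\tau_k=\sigma_k$), whereas you deduce it globally from $\zeta^+_\tau-\zeta^-_\tau=\ell_{A_{-\infty}}(\pi(\tau))$ and $\zeta^+_\tau+\zeta^-_\tau=r-\kappa_\tau$; both are valid.
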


\comment{Notice that if $\pi(\tau)=\varpi^\nu$ then we have $\varpi^\nu\tau_r\dotsm \tau_k(\alpha_{\sigma_k}) = \pm\tau_1\dotsm\tau_k(\alpha_{\sigma_k})$ where $\pm=+$ if $\tau_k=e$ and $\pm=-$ if $
\tau_k=\sigma_k$. Since the action of $\varpi^\nu$ does change the finite part of an affine root, we have $\sgn_{A_{-\infty}}(\tau_r\dotsm\tau_k(\alpha_{\sigma_k}))=\pm\sgn_{A_{-\infty}}(\tau_1\dotsm\tau_k(\alpha_{\sigma_k}))$ with the sign as above. So the bijection preserves $\sD_{A_0}^{A_{-\infty}}(\sigma)$, preserves $\kappa_\tau$, and exchanges $\zeta_\tau^+$ with $\zeta^-_\tau$. The difference between the latter is $\ell_{A_{-\infty}}(\varpi^\nu)=\langle\nu,2\rho\rangle$.}

Therefore
\begin{align}
\#_{\FF_q}(\bU^-\varpi^\nu\cdot \bI \cap (\sqcup_{u\in W_\circ} \bI m_\lambda u\cdot \bI)) &= q^{\langle \nu, 2\rho\rangle}\#_{\FF_q}(\bU^-\varpi^{-\nu}\cdot \bI \cap (\sqcup_{u\in W_\circ} \bI u^{-1} m_\lambda^{-1}\cdot \bI))\notag\\
                                                                                           &= q^{\langle \nu, 2\rho\rangle}\#_{\FF_q}(\bU^-\varpi^{-\nu}\cdot \bI \cap \bK\varpi^{-\lambda}\cdot \bI)\label{E:3}
\mathperiod                                                                                            
\end{align}
By stringing together \eqref{E:1},\eqref{E:2}, and \eqref{E:3}, our formula \eqref{E:Eat0} is converted to Ion's formula \eqref{E:Ion-formula}.

\end{document}